\newif\ifModelAomart
\IfFileExists{aomart.cls}{%
  \ModelAomarttrue
  \documentclass[twoside]{aomart}%
}{%
  \ModelAomartfalse
  \documentclass[11pt,a4paper,twoside]{article}%
}
\usepackage[T1]{fontenc}
\usepackage[utf8]{inputenc}
\usepackage{lmodern,amsmath,amssymb,amsthm,mathrsfs,booktabs,needspace,array,enumitem}
\usepackage{xcolor,microtype,geometry,indentfirst}
\usepackage{tikz}
\usetikzlibrary{arrows.meta,calc,positioning}
\usepackage{fancyhdr}
\usepackage{hyperref}
\usepackage[capitalise,nameinlink,noabbrev]{cleveref}

\definecolor{ContentsBlue}{RGB}{20,65,165}
\definecolor{CitationGreen}{RGB}{0,115,45}
\definecolor{ReferenceRed}{RGB}{180,25,35}
\hypersetup{colorlinks=true,linkcolor=ReferenceRed,citecolor=CitationGreen,
  urlcolor=ContentsBlue,linktoc=all,
  pdftitle={A uniform quantitative lower bound for Pythagorean pairs in boxes of prime powers}}

\ifModelAomart
  \copyrightnote{}
  \doinumber{}
\else
  \providecommand{\copyrightnote}[1]{}
  \providecommand{\doinumber}[1]{}
  \newcommand{\ModelAddress}{}
  \newcommand{\ModelEmail}{}
  \newcommand{\ModelKeywords}{}
  \newcommand{\ModelSubjClass}{}
  \newcommand{\ModelThanks}{}
  \newcommand{\address}[1]{\gdef\ModelAddress{#1}}
  \newcommand{\email}[1]{\gdef\ModelEmail{#1}}
  \newcommand{\keywords}[1]{\gdef\ModelKeywords{#1}}
  \newcommand{\subjclass}[2][]{\gdef\ModelSubjClass{#2}}
  \renewcommand{\thanks}[1]{%
    \gdef\ModelThanks{#1}%
  }
\fi

\fancypagestyle{plain}{%
  \fancyhf{}%
  \fancyfoot[LE,RO]{\footnotesize\thepage}%
  }
\fancypagestyle{firstpage}{%
  \fancyhf{}%
  \fancyfoot[LE,RO]{\footnotesize\thepage}%
  }

\newtheoremstyle{paperplain}{.65\baselineskip}{.65\baselineskip}
  {\itshape}{0pt}{\normalfont\bfseries}{.}{.5em}{}
\newtheoremstyle{paperdefinition}{.65\baselineskip}{.65\baselineskip}
  {\normalfont}{0pt}{\normalfont\bfseries}{.}{.5em}{}
\theoremstyle{paperplain}
\newtheorem{theorem}{Theorem}[section]

\newtheorem{lemma}[theorem]{Lemma}

\theoremstyle{paperdefinition}

\newtheorem{remark}[theorem]{Remark}

\makeatletter
\@namedef{subjclassname@2020}{2020 Mathematics Subject Classification}
\renewcommand\section{\@startsection{section}{1}{\z@}
  {1.2\baselineskip plus .3\baselineskip minus .2\baselineskip}
  {.6\baselineskip}{\normalfont\large\bfseries\raggedright}}
\renewcommand\subsection{\@startsection{subsection}{2}{\z@}
  {.9\baselineskip plus .2\baselineskip minus .1\baselineskip}
  {-.5em}{\normalfont\bfseries}}
\renewcommand\subsubsection{\@startsection{subsubsection}{3}{\z@}
  {.7\baselineskip}{-.5em}
  {\normalfont\itshape}}
\renewenvironment{proof}[1][\proofname]{\par\pushQED{\qed}%
  \normalfont\topsep6\p@\@plus3\p@\relax\trivlist
  \item[\hskip\labelsep\itshape #1\@addpunct{.}]\ignorespaces
}{\popQED\endtrivlist\@endpefalse}
\makeatother

\crefname{theorem}{Theorem}{Theorems}
\crefname{lemma}{Lemma}{Lemmas}
\crefname{proposition}{Proposition}{Propositions}
\crefname{corollary}{Corollary}{Corollaries}
\crefname{section}{Section}{Sections}
\crefname{subsection}{Section}{Sections}
\crefname{equation}{Equation}{Equations}
\Crefname{theorem}{Theorem}{Theorems}
\Crefname{lemma}{Lemma}{Lemmas}
\Crefname{proposition}{Proposition}{Propositions}
\Crefname{corollary}{Corollary}{Corollaries}
\Crefname{section}{Section}{Sections}
\Crefname{subsection}{Section}{Sections}
\Crefname{equation}{Equation}{Equations}

\newcommand{\Fq}{\mathbb F_q}
\newcommand{\N}{\mathbb N}
\newcommand{\Z}{\mathbb Z}

\newcommand{\R}{\mathbb R}
\newcommand{\C}{\mathbb C}

\newcommand{\leg}[2]{\left(\frac{#1}{#2}\right)}
\newcommand{\ee}{\mathrm e}
\newcommand{\cond}{\operatorname{cond}}
\newcommand{\1}{\mathbf 1}
\newcommand{\Zzero}{\mathcal Z}

\newcommand{\LauWuCitation}{\cite[Lemmas~4.1 and~4.3; Equation~(5.7)]{LauWu}}

\numberwithin{equation}{section}
\allowdisplaybreaks[1]
\title{Quantitative lower bound for Pythagorean pairs in boxes of prime powers}
\newcommand*{\AuthorName}{Rongzhong Xiao}
\newcommand*{\AuthorAffiliation}{School of Mathematical Sciences, University of Science and Technology of China, Hefei, Anhui, 230026, PR China}
\newcommand*{\AuthorEmail}{xiaorz@mail.ustc.edu.cn}
\newcommand{\FundingText}{This research is supported by the National Natural
Science Foundation of China (No. 12426201), and by the Postdoctoral
Fellowship Program of CPSF under Grant Number GZB20260746.}
\ifx\AuthorName\empty\else\author{\AuthorName}\fi
\ifx\AuthorAffiliation\empty\else\address{\AuthorAffiliation}\fi
\ifx\AuthorEmail\empty\else\email{\AuthorEmail}\fi
\date{}
\keywords{Pythagorean pairs, Quadratic characters, Quadratic nonresidues}
\subjclass[2020]{Primary 11B30; Secondary 11N37, 05D10}
\thanks{\FundingText}

\newcommand{\PaperAbstract}{%
For each positive integer \(N\), let \(p_i\) be the \(i\)-th prime for \(i\in\{1,\ldots,N\}\), and set
\[
 \Phi_N=\left\{\prod_{i=1}^{N}p_i^{a_i}:a_i\in\{0,1,\ldots,N-1\}\right\}.
\]
Write \(f(N)\) for the largest cardinality of a subset of \(\Phi_N\)
containing no two distinct legs of an integral right triangle.  We prove
\[
 N^{N-1}(\log\log\log N)\ll f(N).
\]
}

\newcommand{\FallbackMetadata}{%
  \ifModelAomart\else
    \begingroup
    \renewcommand{\thefootnote}{}
    \makebox[0pt]{\footnotemark}
    \endgroup
    \footnotetext{%
      \textit{2020 Mathematics Subject Classification:} \ModelSubjClass.\par
      \textit{Keywords:} \ModelKeywords.\par
      \ModelThanks
    }%
  \fi
}

\newcommand{\FinalContact}{%
  \ifModelAomart\else
    \bigskip
    \begingroup
    \small
    \noindent\textit{Address:} \ModelAddress\par
    \noindent\textit{E-mail:} \href{mailto:\ModelEmail}{\ModelEmail}\par
    \endgroup
  \fi
}

\begin{document}
\ifModelAomart
  \begin{abstract}\PaperAbstract\end{abstract}
  \maketitle
\else
  \maketitle
  \begin{abstract}\PaperAbstract\end{abstract}
  \FallbackMetadata
\fi
\thispagestyle{plain}

\section{Introduction}
\label{sec:introduction}

\subsection{History and motivations}

The equation $x^2+y^2=z^2$ occupies a prominent place in the study of partition regularity. Erd\H{o}s and Graham \cite[Section~3, Problem~6]{GrahamFavorite} asked whether every finite colouring of $\N$ must contain a monochromatic solution of this equation.

The two-colour problem was initially investigated by computation.  Cooper and Overstreet \cite{CooperOverstreet} produced, by a SAT computation, a colouring of $\{1,\ldots,7664\}$ with no monochromatic Pythagorean triple.  Subsequently, Heule, Kullmann, and Marek \cite[Theorem~1]{HeuleKullmannMarek} established by a certified cube-and-conquer computation, accompanied by a formally checkable certificate of approximately $200$ terabytes, that $\{1,\ldots,7824\}$ still admits such a colouring, whereas every two-colouring of $\{1,\ldots,7825\}$ contains a monochromatic Pythagorean triple.  Two recent independent preprints provide entirely theoretical proofs of the same two-colour result.  Leng \cite{Leng2026} uses analytic and ergodic methods, while Wesley \cite{Wesley2026} uses an elementary argument combining linear algebra with counting.  Thus the two-colour question for finite initial intervals has been resolved; the analogous problem for three or more colours is still open.

A different direction is based on multiplicative harmonic analysis.  Frantzikinakis and Host \cite[Corollary~2.8 and Problem~3]{FrantzikinakisHost} developed higher-order Fourier methods for bounded multiplicative functions. They proved that the following holds: Suppose that $a$, $b$, $c$, and $a+b$ are squares of positive integers. For the equation $ax^2+by^2=cz^2$,
every
finite colouring of $\N$ admits a solution $x,y,z\in\N$ with
$x\ne y$ and $x,y$ of the same colour.
Their work also singled out the two-leg Pythagorean problem: find $x$ and $y$ of the same colour such that $x^2+y^2=z^2$ for some $z$.  Frantzikinakis, Klurman, and Moreira \cite[Theorems~1.1--1.2]{FKM} later proved that every finite colouring of $\N$ contains two distinct same-coloured legs with this property, and they established the corresponding assertions for the other two pairs of entries in a Pythagorean triple.  Their density theorem gives the same conclusion in every set of positive multiplicative density. 

In subsequent work, Frantzikinakis, Klurman, and Moreira \cite[Theorems~1.2--1.4 and 1.9--1.10]{FKMgeneralized} extended the same framework to generalized Pythagorean equations
$ax^2+by^2=cz^2$.  They obtained unconditional partition and density regularity for the relevant pairs whenever $ac$ or $bc$ is a square.  In the complementary case in which $(a+b)c$ is a square, they proved the corresponding assertions assuming an Elliott-type correlation conjecture, and they also obtained an unconditional result for pretentious multiplicative level sets.  These theorems place the Pythagorean-pair problem in a wider theory of homogeneous quadratic forms.

The multiplicative viewpoint has continued to develop.  Azevedo and Moreira \cite{AzevedoMoreira} proved that finitely many completely multiplicative functions can simultaneously be close to $1$ on the three entries of a Pythagorean triple, and they treated square coefficients satisfying Rado's condition.  Frantzikinakis and Mountakis \cite{FrantzikinakisMountakis} established an analogous recurrence theorem for generalized triples $ax^2+by^2=cz^2$ in pretentious systems.  Leng, McNamara, and Mountakis \cite{LengMcNamaraMountakis} obtained almost-Pythagorean quadruples $x^2+y^2w^2=z^2$ in multiplicatively dense sets, allowing $w$ to belong to a second dense set.

The purpose of the present paper is to make this picture quantitative in a finite model.  We replace an arbitrary multiplicative F\o lner sequence by the explicit prime-power boxes introduced below and prove a decay estimate for the largest subset containing no two distinct legs of an integral right triangle.

\subsection{Prime boxes and the main result}

For a positive integer $N$, let $p_1<p_2<\cdots<p_N$ denote the first $N$
primes.  We work in the prime-power box
\begin{equation*}
 \Phi_N=\left\{\prod_{i=1}^{N}p_i^{a_i}:a_i\in\{0,1,\ldots,N-1\}\right\}.
\end{equation*}
Unique factorisation gives $|\Phi_N|=N^N$, and the map
\[
 (a_1,\ldots,a_N)\longmapsto\prod_{i=1}^Np_i^{a_i}
\]
identifies this set with the box of exponent vectors $\{0,\ldots,N-1\}^N$.  It is easy to see that $\Phi=(\Phi_N)_{N\in\N}$ is a multiplicative F\o lner sequence on $\N$. 

Call a set $A\subseteq\Phi_N$ admissible when no two distinct
elements of $A$ are the legs of an integral right triangle; that is, there
are no distinct $u,v\in A$ and no $w\in\N$ satisfying $u^2+v^2=w^2$.
Define $f:\N\to\N$ by
\begin{equation*}
 f(N)=\max\{|A|:A\subseteq\Phi_N\text{ is admissible}\}.
\end{equation*}
The hypotenuse $w$ is unrestricted; in particular, it need not lie in $A$
or in the same prime-power box.  The elementary congruence obstruction
already gives $f(N)\geq N^{N-1}$: restrict to the elements whose exponent
of $2$ is zero.  They are all odd, and the sum of two odd squares is
$2\pmod4$, so it cannot be a square.  The following theorem gives a
logarithmic improvement:

\begin{theorem}\label{thm:main}
There is an absolute positive integer $N_0$ such that, for every $N\in\N$ with $N\geq N_0$,
\begin{equation}\label{eq:main}
 f(N)\geq \frac{a\kappa}{24}\,N^{N-1}(\log \log \log N),
\end{equation}
where the positive real constants $a,b,\kappa$ are
\[
 a=\frac12\log 2,\qquad b=4\log 2,\qquad
 \kappa=\frac{1}{32(1+c_2b)}.
\]
Here $c_2>0$ denotes the absolute constant in part~\textup{(b)} of
\Cref{thm:lau-wu}.
\end{theorem}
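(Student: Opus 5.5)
We plan to generalise the mod $4$ obstruction to an obstruction modulo many odd primes $q$ via quadratic characters. Every primitive Pythagorean triple has legs $m^2-n^2$ and $2mn$ with $\gcd(m,n)=1$ and $m\not\equiv n\pmod 2$. The key observation is that if $u^2+v^2=w^2$, then after dividing by $d=\gcd(u,v)$ one leg is odd and the other even. So the exponents of $2$ in $u$ and $v$ must differ, and the $2$-adic valuation gap is at least $2$: the even leg is $2mn$ with one of $m,n$ even, so it is divisible by $4$. Moreover $u'^2+v'^2$ is a square only if every prime $\ell\equiv 3\pmod 4$ dividing $u'v'$ behaves compatibly. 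Concretely, $w'^2\equiv u'^2\pmod{v'}$, and the hypotenuse factors as $m^2+n^2$, whose prime factors are all $1\pmod 4$.

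To obtain a factor $\log\log\log N$, we plan the following construction. Fix the exponent $a_1$ of $2$, and select a subset $A_{a_1}$ of exponent vectors for the odd primes. We then take $A$ to be a union over $a_1$ in a set $S\subseteq\{0,\dots,N-1\}$, with each layer of size roughly $\kappa N^{N-1}$ times a density. Pairs inside a single layer are automatically forbidden, since they have equal $2$-adic valuation and hence are both odd or both even after removing $d$. For pairs from distinct layers $a_1\ne a_1'$, we need an obstruction. We plan to use the structure $u'=m^2-n^2$ (odd leg), $v'=2mn$. Then the odd part of $v'/2^{\,\cdot}$ is $mn$ and $u'=(m-n)(m+n)$, so Legendre symbol constraints hold: every odd prime $\ell\mid v'$ has $\left(\frac{u'}{\ell}\right)=\left(\frac{m^2-n^2}{\ell}\right)$ with $\ell\mid mn$, which gives $\left(\frac{u'}{\ell}\right)=1$. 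We would choose layers defined by a condition of the form $\prod$ over small primes of a character value of the odd part being $-1$. This is arranged so that pairs in different chosen layers always contain some prime $\ell$ dividing one leg at which the other is a nonresidue. The count is then a sum over layers of densities, weighted by the probability that a random exponent vector satisfies the character condition.

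The quantitative input is \Cref{thm:lau-wu} of Lau--Wu. We read it as giving estimates for the least quadratic nonresidue and for the proportion of primes (equivalently, of moduli) whose character has prescribed values on the first $k$ primes, with an error governed by $c_2b$. With $b=4\log 2$ and $a=\tfrac12\log2$, we would take about $k\asymp\log\log N$ small primes as "test primes". Part (b) is used to guarantee that a positive proportion, at least $\kappa$, of configurations among the first $N$ primes realise each needed sign pattern. Chaining layers greedily, where each new layer's distinguishing prime is a least nonresidue of size $\ll e^{a\cdot}$, yields about $\tfrac{a}{24}\log\log\log N$ usable layers. Hence $f(N)\ge \tfrac{a\kappa}{24}N^{N-1}\log\log\log N$.

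The main obstacle we expect is making the cross-layer obstruction rigorous. The common factor $d=\gcd(u,v)$ can absorb the test primes, so a character condition on $u$ need not transfer to $u'=u/d$. Our first attempt at handling this is to impose the character conditions only on primes whose exponents are forced to be distinct or zero within a layer. The uniform density lower bound from \Cref{thm:lau-wu}(b) then has to be applied to the remaining free coordinates, losing only the constant factor $1/24$.
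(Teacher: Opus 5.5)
Your within-layer argument is correct, but it only reproduces the trivial bound $N^{N-1}$. The proposal has a genuine gap exactly where you place the main obstacle: it never produces a cross-layer obstruction, and the local route you sketch cannot supply one.

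The Legendre identity you rely on is false as stated. From $u'=m^2-n^2$, $v'=2mn$ you get $\leg{u'}{\ell}=1$ only when $\ell\mid n$. If $\ell\mid m$, then $\leg{u'}{\ell}=\leg{-1}{\ell}$, so primes $\ell\equiv3\pmod4$ dividing the even leg impose no constraint; for example $5^2+12^2=13^2$ with $\leg{5}{3}=-1$. Any surviving condition applies only to primes dividing the reduced leg $v/\gcd(u,v)$, which depends on the pair. Conditions imposed on single elements of $\Phi_N$ therefore do not transfer. If you instead force the exponent of a test prime into pairwise disjoint sets $E_i$ on different layers, the union has size at most $\sum_i N^{N-2}|E_i|\le N^{N-1}$, and the gain disappears.

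The $2$-adic valuation alone only separates layers at distance at most $1$. Layers $j$ and $j+1$ are jointly admissible, since then $u'^2+v'^2\equiv5\pmod8$. But the pair $3\cdot2^j$, $2^{j+2}$ shows that layers at distance $2$ already clash.

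Finally, \Cref{thm:lau-wu} is a zero-free-region and zero-density statement for $\prod_{\chi\in\mathcal C_y}L(s,\chi)$. It does not say that a proportion $\kappa$ of configurations realises a sign pattern. The constants $\kappa$ and $a/24$, and the count of ``$\log\log\log N$ usable layers'' in your third paragraph, are not derived from anything.

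The missing idea is to replace local conditions at small primes by a single auxiliary prime $q\in(p_N,2p_N)$. Every element of $\Phi_N$ is then a unit modulo $q$. Because $u^2+v^2=w^2$ is homogeneous, gcds never enter. If $S\subset\Fq^\times$ satisfies $\leg{s^2+t^2}{q}=-1$ for all $s,t\in S$, then each $B_c=\{n\in\Phi_N:n\bmod q\in cS\}$ is admissible. Averaging over $c\in\Fq^\times$ gives $f(N)\ge N^N|S|/(q-1)$.

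Such an $S$ with $|S|\ge K/6-1$, where $K=\lfloor y\rfloor$, exists when $q\equiv5\pmod8$ and all odd primes $\le y$ except at most one, $r$, are residues modulo $q$. Take the square roots of the integers $u\le K$ with $u\equiv1\pmod4$ and $r\nmid u$, keeping one class from each pair $\{a,-a\}$ modulo $r$. Then $u+v=2\cdot(u+v)/2$ is a nonresidue. This yields $f(N)\gg N^{N-1}y/\log p_N$, so the whole difficulty becomes producing such a $q$ with $y=\kappa\log p_N\log_3p_N$.

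That is where Lau--Wu is used:
\begin{enumerate}[label=(\roman*)]
\item expand $F_y(n)=(1+\chi_{-4}(n))(1-\chi_8(n))\prod_{r\le y,\,r\notin E_y}(1+\chi_{r^*}(n))$, with $r$ running over odd primes, into the characters of $\mathcal D_y$;
\item apply a smoothed explicit formula to each character;
\item discard the possible exceptional real zero by omitting one conductor prime, as in \Cref{lem:deletion};
\item use the zero-density bound (b), summed over all $2^{\pi(y)+1}$ characters, to make the total error $o(x)$.
\end{enumerate}
The constant $\kappa$ is chosen so that $c_2H_y/\log(1/\varepsilon)\le\frac14\log x$ with $\varepsilon=(\log\log x)^{-1/2}$. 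The factor $a/24$ collects $N\ge ap_N/\log p_N$, $q<2p_N$, $|S|\ge K/6-1$, and the absorption of the lower-order term.
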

Combining the above lower bound and \cite[Theorem~1.1]{XiaoPrimeBoxes}, we have
$$N^{N-1}(\log \log \log N)\ll f(N)\ll N^N(\log\log N)^{-1/1296}.$$ 

\medskip
\noindent\textbf{Structure of the paper.}
\Cref{sec:proof-outline} outlines the proof of the main result.
\Cref{sec:preliminaries} records the prime estimates and quadratic characters
used throughout.  \Cref{sec:analytic} states the required bounds for zeros
of Dirichlet $L$-functions and proves a smoothed explicit formula.
\Cref{sec:omission} removes a possible exceptional character by omitting
one odd prime from the allowed conductors.  \Cref{sec:finite-field}
constructs a set of residues in which every sum of two squares is a
quadratic nonresidue.  \Cref{sec:main-proof} combines these ingredients
and proves \Cref{thm:main}.

\medskip
\noindent\textbf{Notation.}
We write $\N=\{1,2,\ldots\}$, $\Z_{\geq0}=\{0,1,2,\ldots\}$,
and use $\Z,\R,\C$ for the integers, real numbers, and complex numbers.
The notation $\R_{>0}$ denotes the positive real numbers, and
$\mathcal P=\{2,3,5,\ldots\}$ is the set of all prime numbers.
For $p\in\mathcal P$ and $n\in\N$, let
\begin{equation*}
 v_p(n)=\max\{k\in\Z_{\geq0}:p^k\mid n\}.
\end{equation*}

All logarithms are natural.  For $t\in\R_{>0}$ and $j\in\N$, set
\[
 \log_1t=\log t,\qquad \log_{j+1}t=\log(\log_jt),
\]
where the second expression is used only when $\log_jt>0$ and all
preceding iterates are defined.  

The notations
$A\lesssim_{d_1,\ldots,d_k}B$ and $A\ll_{d_1,\ldots,d_k}B$ mean
$A\leq C_{d_1,\ldots,d_k}B$, where
$C_{d_1,\ldots,d_k}\in\R_{>0}$ depends only on those data.
The notation
$A\asymp_{d_1,\ldots,d_k}B$ means both comparison inequalities.
For a real or complex quantity $A$ and $B\geq0$ real,
$A=O_{d_1,\ldots,d_k}(B)$ means $|A|\ll_{d_1,\ldots,d_k}B$.
The notation $A=o(B)$ means $A/B\to0$ in the stated limit, with $B>0$.
For positive quantities $A$ and $B$, we write $A\sim B$ when
$A/B\to1$ in the stated limit.

\medskip
\noindent\textbf{Declaration on the use of AI.}
The mathematical results, arguments, and proofs in this paper 
were developed by the author, who takes full responsibility for the final manuscript.
AI tools were used only for language polishing and to assist with literature searches . 

\medskip
\noindent\textbf{Conflict of Interest Statement.} The author states that there is no conflict of interest.

\section{Proof outline of the main result}
\label{sec:proof-outline}

The proof combines a congruence obstruction with estimates for
quadratic characters. The congruence obstruction produces an
admissible set once a suitable prime is available. The analytic
argument finds that prime while allowing one exception among the
prescribed quadratic residues. The finite-field construction is
designed to tolerate this exception.

\medskip\noindent
\textbf{The congruence obstruction and the desired scale.}
Suppose that $q>p_N$ is prime and that, for a large integer $K$,
a set $S\subset\Fq^\times$ satisfies
\begin{equation}\label{eq:outline-finite-field}
 |S|\geq K/6-1,\qquad
 \leg{s^2+t^2}{q}=-1\quad(s,t\in S).
\end{equation}
Multiplication by $c\in\Fq^\times$ preserves the second property,
because $c^2$ is a nonzero square. Thus
$B_c=\{n\in\Phi_N:n\bmod q\in cS\}$ is admissible: an equality
$u^2+v^2=w^2$ with $u,v\in B_c$ would make a quadratic nonresidue
modulo $q$ equal to a square. Every element of $\Phi_N$ is nonzero
modulo $q$ and belongs to exactly $|S|$ of these sets. Consequently,
\begin{equation}\label{eq:outline-average}
 f(N)\geq\frac{N^N|S|}{q-1}.
\end{equation}
This averaging argument requires no information about how $\Phi_N$
is distributed among residue classes. Let $y$ be the small-prime
cutoff to be chosen below. If $p_N<q<2p_N$ and $K\asymp y$,
Chebyshev's estimates give a lower bound of order
$N^{N-1}y/\log p_N$. We therefore seek a prime of this size for
which $y$ can exceed $\log p_N$ by a growing factor.

\medskip\noindent
\textbf{Constructing the residues.}
The required prime will satisfy $q\equiv5\pmod8$, with every odd
prime at most $y$ a quadratic residue modulo $q$, except possibly
one prime. Put $K=\lfloor y\rfloor<q$. Start with the integers
$u\in[1,K]$ satisfying $u\equiv1\pmod4$. For any two such integers,
$(u+v)/2$ is an odd integer at most $K$. If neither $u$ nor
$(u+v)/2$ is divisible by the possible omitted prime, both are
quadratic residues modulo $q$. Since $\displaystyle \leg{2}{q}=-1$, the identity
$u+v=2((u+v)/2)$ makes $u+v$ a quadratic nonresidue.

If the omitted prime is $r$, first remove multiples of $r$.
Then partition the nonzero residue classes modulo $r$ into pairs
$\{a,-a\}$. In each pair, choose the class represented by more
of the remaining integers; if the counts agree, choose either class.
This retains at least half of the integers and ensures
$r\nmid u+v$ for all selected $u,v$. The resulting set $U$ satisfies
$|U|\geq K/12-1/2$. Taking both square roots modulo $q$ of each
$u\in U$ gives \eqref{eq:outline-finite-field}. Thus omitting one
prime costs only a fixed proportion of the construction.

\medskip\noindent
\textbf{Removing the possible exceptional character.}
For sufficiently large $x$, choose
\begin{equation}\label{eq:outline-scale}
 L=\log x,\qquad J=\log L,\qquad T=\log J,\qquad
 y=\kappa LT=\kappa\log x\,\log_3x.
\end{equation}
Let $\displaystyle P_y=\prod_{p\in\mathcal P,\,p\leq y}p$, $H=\log P_y$, and let
$\mathcal C_y$ be the family of primitive real characters whose
conductors divide $4P_y$. The Lau--Wu theorem (see \Cref{thm:lau-wu}) permits at most one
zero in its specified region near $1$, and that zero must be real.
Such a zero could contribute a term comparable with $x$ in the
explicit formula, so we exclude its character before summing the errors.

\Cref{lem:deletion} shows that this character, if it exists,
has an odd prime divisor in its conductor. Omitting this prime
from $P_y$ excludes the character. Denote the omitted set by $E_y$,
with $|E_y|\leq1$. Retain precisely those characters in $\mathcal C_y$
whose conductors are not divisible by any prime in $E_y$, and denote
this family by $\mathcal D_y$.
Its $L$-functions have no zeros with $\Re s\geq1-\eta(y)$ and
$|\Im s|\leq H$, where $\eta(y)=c_1\sqrt{\log H}/(2H)$ as in
\Cref{thm:lau-wu}. The preceding construction explains why this
analytic omission is compatible with the congruence argument.

\medskip\noindent
\textbf{Controlling the sum of the character errors.}
We need an estimate for all characters together:
\begin{equation}\label{eq:outline-aggregate}
 \sum_{\chi\in\mathcal D_y}
 \left|T_x(\chi)-\1_{\chi=\chi_1}xW(1)\right|=o(x),
\end{equation}
Here $\chi_1$ is the principal character, and $T_x$, $W$, and
the weight $h$ are defined in \Cref{lem:explicit}.
An error of size $o(x)$ for each character separately would not
suffice, because the number of characters grows with $x$.

The choice \eqref{eq:outline-scale} makes the required sum small.
Indeed, $H\asymp LT$, $\log H\sim J$, and
$\log|\mathcal C_y|=O(LT/J)$.
With $\varepsilon=J^{-1/2}$, this last quantity is
$o(\varepsilon L)$. Hence the factor $x^{-\varepsilon}$ absorbs
the number of characters when bounding zeros with
$\Re s<1-\varepsilon$ and $|\Im s|\leq H$.
For zeros nearer $1$ in the same height range, the Lau--Wu bounds
control their total number across the family. Combining these
bounds with the absence of zeros for $\Re s\geq1-\eta(y)$ bounds
their contribution by a constant times
\begin{equation*}
 x\sqrt{\log H}
 \left(\ee^{-3\varepsilon L/4}+\ee^{-3\eta(y)L/4}\right)=o(x).
\end{equation*}
Here $\eta(y)L\asymp\sqrt J/T$ grows faster than
$\log\log H\sim T$, as does $\varepsilon L=L/\sqrt J$.
The exponential decay of $W$ controls zeros with $|\Im s|>H$,
and the explicit-formula errors also sum to $o(x)$.
These estimates establish \eqref{eq:outline-aggregate}.

\medskip\noindent
\textbf{Finding the prime and completing the argument.}
The product $F_y$ in \eqref{eq:F-definition} encodes the required
residue conditions through factors of the form $1+\chi(n)$ or
$1-\chi(n)$. Every factor is nonnegative. Expanding the product
gives exactly the characters of $\mathcal D_y$, each with coefficient
$1$ or $-1$, and the principal character has coefficient $1$.
Thus \eqref{eq:outline-aggregate} yields
\begin{equation*}
 \sum_{p\in\mathcal P}(\log p)h(p/x)F_y(p)
 =xW(1)+o(x)>0.
\end{equation*}
The weight satisfies $h(t)\leq0$ outside $(1,2)$. Positivity
therefore supplies a prime $q\in(x,2x)$ with $F_y(q)>0$.
The factors force $q\equiv5\pmod8$ and
$\displaystyle \leg{r}{q}=1$ for every odd prime $r\leq y$ outside $E_y$.

Finally, take $x=p_N$, apply the finite-field construction, and
use \eqref{eq:outline-average}. The scale
$y=\kappa\log p_N\,\log_3p_N$ gives the factor $\log_3N$.
\Cref{sec:main-proof} supplies the estimates and obtains
the constant $a\kappa/24$.

\section{Preliminaries}
\label{sec:preliminaries}

\subsection{Prime estimates}
For $t\in\R_{>0}$, let
\[
 \pi(t)=\#\{p\in\mathcal P:p\leq t\},\qquad
 \vartheta(t)=\sum_{\substack{p\in\mathcal P\\p\leq t}}\log p,\qquad 
 \psi(t)=\sum_{\substack{p\in\mathcal P,\ j\in\N\\p^j\leq t}}\log p.
\]
We shall use only the following elementary form of Chebyshev's estimates.

\begin{lemma}\label{lem:cheb}
There is a real constant \(t_0\geq2\) such that, for every real \(t\geq t_0\),
\begin{equation}\label{eq:cheb}
 at\leq\vartheta(t)\leq bt,
 \qquad
 \frac{at}{\log t}\leq\pi(t)\leq\frac{8t}{\log t},
\end{equation}
where \(a=\frac12\log2\) and \(b=4\log2\).
\end{lemma}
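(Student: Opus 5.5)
We prove the four inequalities in \eqref{eq:cheb} by the classical Chebyshev method, using central binomial coefficients; an asymptotic slack then absorbs all boundary terms. Since $a=\tfrac12\log2$ and $b=4\log2$ are far from the sharp constants, no care with constants is needed beyond taking $t_0$ large.

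\emph{Upper bound for $\vartheta$.} Every prime $p\in(m+1,2m+1]$ divides $\binom{2m+1}{m}$, and $\binom{2m+1}{m}\leq 4^m$. Hence $\vartheta(2m+1)-\vartheta(m+1)\leq m\log4$. Induction on $t$, splitting into odd and even cases in the usual way, gives $\vartheta(t)\leq t\log 4=2t\log2\leq bt$ for all $t\geq1$.

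\emph{Lower bound for $\vartheta$.} By Legendre's formula, each prime power $p^k$ exactly dividing $\binom{2n}{n}$ satisfies $p^k\leq 2n$. Combining this with $\binom{2n}{n}\geq 4^n/(2n+1)$ gives
\[
 n\log4-\log(2n+1)\leq\psi(2n).
\]
Next, $\psi(t)-\vartheta(t)=\sum_{k\geq2}\vartheta(t^{1/k})\leq \sqrt t\,(\log t)\cdot O(1)$, by the upper bound above and the fact that only $O(\log t)$ values of $k$ contribute. So $\vartheta(t)\geq t\log2-O(\sqrt t\log t)$ for $t$ of the form $2n$; for general $t$ take $n=\lfloor t/2\rfloor$, which loses only $O(\log t)$. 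This exceeds $\tfrac12 t\log 2=at$ once $t\geq t_0$.

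\emph{Bounds for $\pi$.} For the lower bound, $\pi(t)\geq\vartheta(t)/\log t\geq at/\log t$. For the upper bound, split at $\sqrt t$ (or at $t/\log^2t$). Every prime in $(\sqrt t,t]$ has $\log p>\tfrac12\log t$, so
\[
 \pi(t)\leq \sqrt t+\frac{2\vartheta(t)}{\log t}\leq\sqrt t+\frac{4t\log2}{\log t}.
\]
Since $4\log2<2.78<8$, the term $\sqrt t$ is absorbed for large $t$, giving $\pi(t)\leq 8t/\log t$.

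Finally, choose $t_0\geq2$ so large that all of the above error terms are absorbed. The only mildly delicate step is the lower bound for $\vartheta$, which needs the passage from $\psi$ to $\vartheta$ and a handling of non-even $t$; even that has ample slack, because $a$ is half the true constant available.
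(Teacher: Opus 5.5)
Your proposal is correct and follows essentially the same route as the paper: you get the lower bound for $\vartheta$ from Legendre's formula and $\binom{2n}{n}\geq 4^n/(2n+1)$, you remove the higher prime powers via $\psi(t)-\vartheta(t)=\sum_{k\geq2}\vartheta(t^{1/k})$, and you split $\pi(t)$ at $\sqrt t$. The only difference is the upper bound for $\vartheta$: you use the odd binomial $\binom{2m+1}{m}\leq 4^m$ and induction to get $\vartheta(t)\leq 2t\log 2$, whereas the paper bounds $\psi(2n)-\psi(n)\leq\log\binom{2n}{n}\leq 2n\log 2$ and telescopes over $n=2^j$ to get the weaker but sufficient bound $\psi(u)\leq 4u\log 2$.
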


\begin{proof}
We first compare $\psi$ with central binomial coefficients, then remove
the higher prime powers and estimate $\pi$.

\medskip
\noindent\textbf{Step 1. The binomial comparison.}
For $p\in\mathcal P$ and $m\in\N$, let $v_p(m)$ denote the exponent
of $p$ in the prime factorisation of $m$.  Counting multiples of each
power of $p$ in $m!$ gives
\begin{equation*}
 v_p(m!)=
 \sum_{\substack{j\in\N\\p^j\leq m}}
 \left\lfloor\frac{m}{p^j}\right\rfloor.
\end{equation*}
Indeed, a factor of $m!$ divisible by $p^r$ but not by $p^{r+1}$,
where $r\in\Z_{\geq0}$, is counted once for each
$j\in\{1,\ldots,r\}$.  Empty sums are understood to be zero.
Unique factorisation therefore gives, for every $n\in\N$,
\begin{equation*}
 \log\binom{2n}{n}
 =\sum_{\substack{p\in\mathcal P\\p\leq2n}}
   \sum_{j=1}^{\lfloor\log(2n)/\log p\rfloor}
 \left(
 \left\lfloor\frac{2n}{p^j}\right\rfloor
 -2\left\lfloor\frac{n}{p^j}\right\rfloor
 \right)\log p.
\end{equation*}
Each parenthesised expression is $0$ or $1$, since
$\lfloor2u\rfloor-2\lfloor u\rfloor\in\{0,1\}$ for every real
$u\geq0$.  It equals $1$ whenever $n<p^j\leq2n$.  Thus
\begin{equation}\label{eq:cheb-binomial-psi}
 \psi(2n)-\psi(n)
 \leq\log\binom{2n}{n}\leq\psi(2n)
 \qquad(n\in\N).
\end{equation}
The central coefficient is the largest of the coefficients
$\displaystyle \binom{2n}{k}$, $k\in\{0,\ldots,2n\}$: their successive ratio is
$(2n-k)/(k+1)$ for $k\in\{0,\ldots,2n-1\}$, which is greater than
$1$ for $k<n$ and less than $1$ for $k\geq n$.  The binomial theorem gives
\begin{equation}\label{eq:cheb-binomial-size}
 \sum_{k=0}^{2n}\binom{2n}{k}=2^{2n},
 \qquad
 \frac{2^{2n}}{2n+1}\leq\binom{2n}{n}\leq2^{2n}.
\end{equation}

\medskip
\noindent\textbf{Step 2. Linear bounds for $\psi$.}
For real $t\geq2$, set $n=\lfloor t/2\rfloor\in\N$.
Since $t-2\leq2n\leq t$, the monotonicity of $\psi$ and
\eqref{eq:cheb-binomial-psi}--\eqref{eq:cheb-binomial-size} imply
\begin{equation}\label{eq:cheb-psi-lower}
 \psi(t)\geq\psi(2n)
 \geq2n\log2-\log(2n+1)
 \geq(t-2)\log2-\log(t+1).
\end{equation}
For the upper bound, let $u\geq1$ be real and put
$J=\lfloor\log u/\log2\rfloor\in\Z_{\geq0}$.
Then $2^J\leq u<2^{J+1}$.  Apply
\eqref{eq:cheb-binomial-psi}--\eqref{eq:cheb-binomial-size}
with $n=2^j$ for each $j\in\{0,\ldots,J\}$.
Using $\psi(1)=0$, we obtain
\begin{align}
 \psi(u)
 \leq\psi(2^{J+1})
 =\sum_{j=0}^{J}\bigl(\psi(2^{j+1})-\psi(2^j)\bigr)
\leq(\log2)\sum_{j=0}^{J}2^{j+1}
 =(2^{J+2}-2)\log2
 \leq 4u\log2.
 \label{eq:cheb-psi-upper}
\end{align}
In particular, $\vartheta(u)\leq\psi(u)\leq bu$ for every real $u\geq1$.

\medskip
\noindent\textbf{Step 3. Removing higher prime powers.}
For real $t\geq2$, let
$K=\lfloor\log t/\log2\rfloor\in\N$.
No prime power $p^j\leq t$ can have $j>K$, since $p\geq2$.
Grouping the terms of $\psi(t)$ by their exponent, and applying
\eqref{eq:cheb-psi-upper}, gives
\begin{align*}
 0\leq\psi(t)-\vartheta(t)
 =\sum_{j=2}^{K}\vartheta(t^{1/j})
 \leq b\sum_{j=2}^{K}t^{1/j}
 \leq b(K-1)\sqrt t
 \leq4\sqrt t\log t.
\end{align*}
Here $t^{1/j}\leq\sqrt t$ for every integer $j\geq2$, and
$K\leq\log t/\log2$; the sums are empty when $K=1$.
Together with \eqref{eq:cheb-psi-lower}, this yields
\begin{equation}\label{eq:cheb-theta-lower}
 \vartheta(t)\geq t\log2-E(t),\qquad
 E(t)=2\log2+\log(t+1)+4\sqrt t\log t.
\end{equation}
Since $E(t)/t\to0$ as $t\to\infty$, there is a real $t_1\geq2$
such that $E(t)\leq at$ for all real $t\geq t_1$.
As $\log2=2a$, \eqref{eq:cheb-psi-upper} and
\eqref{eq:cheb-theta-lower} give
\begin{equation*}
 at\leq\vartheta(t)\leq bt\qquad(t\geq t_1).
\end{equation*}

\medskip
\noindent\textbf{Step 4. Bounds for the prime-counting function.}
For real $t\geq t_1$, each prime $p\leq t$ satisfies
$\log p\leq\log t$, so
\begin{equation*}
 \pi(t)\geq\frac{\vartheta(t)}{\log t}
 \geq\frac{at}{\log t}.
\end{equation*}
For the upper bound, there are at most $\sqrt t$ primes at most
$\sqrt t$, and every prime $p>\sqrt t$ satisfies
$\log p>\tfrac12\log t$.  Consequently,
\begin{align*}
 \pi(t)
 =\pi(\sqrt t)+
   \sum_{\substack{p\in\mathcal P\\\sqrt t<p\leq t}}1
 \leq\sqrt t+
   \frac{2}{\log t}
   \sum_{\substack{p\in\mathcal P\\\sqrt t<p\leq t}}\log p
 \leq\left(8\log2+\frac{\log t}{\sqrt t}\right)
       \frac{t}{\log t}.
\end{align*}
Because $\log2<1$ and $(\log t)/\sqrt t\to0$, we may choose a real
$t_0\geq t_1$ such that $$(\log t)/\sqrt t\leq8(1-\log2)$$ for
every real $t\geq t_0$.  The last display then gives
$\pi(t)\leq8t/\log t$, completing the proof.
\end{proof}

\subsection{Quadratic characters}
\label{sec:quadratic-characters}
For real $y\geq100$, put
\begin{equation}\label{eq:PyH}
 P_y=\prod_{\substack{p\in\mathcal P\\p\leq y}}p,
 \qquad H_y=\log P_y=\vartheta(y).
\end{equation}
When $y$ is fixed, we also write $H=H_y$.
We use the standard discriminant notation
\begin{equation*}
 \chi_D(n)=\leg{D}{n}\qquad(n\in\Z),
 \qquad \cond(\chi_D)=|D|,
\end{equation*}
where the symbol is the Kronecker symbol and $D\in\Z\setminus\{0\}$
is a fundamental discriminant.  Explicitly, either $D$ is squarefree
and $D\equiv1\pmod4$, or $D=4b$ with $b\in\Z$ squarefree and
$b\equiv2,3\pmod4$.  We include $D=1$, for which $\chi_1(n)=1$
for every $n\in\Z$.  The correspondence $D\mapsto\chi_D$ is a
bijection onto the primitive real Dirichlet characters; see
\cite[Section~1]{LauWu}.

The family needed below is
\begin{equation}\label{eq:Cy}
 \mathcal C_y=
 \{\chi_D:D\text{ is a fundamental discriminant},\ |D|\mid4P_y\}.
\end{equation}
Its members can be listed explicitly.  For each positive divisor
$d$ of $P_y/2$, define
\begin{equation*}
 d^*=(-1)^{(d-1)/2}d
     =\prod_{\substack{r\in\mathcal P, r>2\\r\mid d}}r^*,
 \qquad r^*=(-1)^{(r-1)/2}r\quad(r\in\mathcal P,\ r>2).
\end{equation*}
Here $d$ is odd and squarefree, $d^*\equiv1\pmod4$, and $1^*=1$.
The product identity follows by multiplying the signs for the odd
prime factors of $d$.  The four possibilities are
\begin{center}
\begin{tabular}{@{}ccc@{}}
\toprule
Discriminant $D$ & Character & Conductor $|D|$\\
\midrule
$d^*$ & $\chi_{d^*}$ & $d$\\
$-4d^*$ & $\chi_{-4d^*}$ & $4d$\\
$8d^*$ & $\chi_{8d^*}$ & $8d$\\
$-8d^*$ & $\chi_{-8d^*}$ & $8d$\\
\bottomrule
\end{tabular}
\end{center}
Indeed, the definition of a fundamental discriminant permits only
the exponents $0,2,3$ at $2$.  For each odd divisor $d$ of $P_y/2$, the congruence
condition fixes the sign when the exponent is $0$ or $2$, and permits
both signs when it is $3$.  This proves that the table is exhaustive.
It has no repetitions: writing $|D|=2^j d$ with $d$ odd
uniquely determines $d$, and the exponent $j$ and the sign of $D$
distinguish the four rows.  Consequently,
\begin{equation}\label{eq:character-family-size}
 R_y=|\mathcal C_y|=4\cdot2^{\pi(y)-1}=2^{\pi(y)+1},
 \qquad \cond(\chi_D)\leq8d\leq4P_y.
\end{equation}
In particular, $\chi_1$ occurs exactly once.  For example, $d=3$
gives the discriminants $-3,12,-24,24$.  The last two characters
have the same conductor $24$, but opposite parity, since
$\chi_D(-1)=\operatorname{sgn}D$.

Multiplicativity of the Kronecker symbol gives the identities
\begin{equation}\label{eq:quadratic-products}
\begin{aligned}
 \chi_{d^*}&=\prod_{\substack{r\in\mathcal P, r>2\\r\mid d}}\chi_{r^*},
 &\chi_{-4d^*}&=\chi_{-4}\chi_{d^*},\\
 \chi_{8d^*}&=\chi_8\chi_{d^*},
 &\chi_{-8d^*}&=\chi_{-4}\chi_8\chi_{d^*}.
\end{aligned}
\end{equation}
These are identities on all of $\Z$.  In the last identity,
$\chi_{-4}\chi_8=\chi_{-8}$: on odd integers this follows from
$(4/n)=1$, and on even integers both sides vanish.

\section{Analytic input}
\label{sec:analytic}

For a Dirichlet character $\chi$ modulo $Q$, its Dirichlet
$L$-function is initially defined, for $\Re s>1$, by the absolutely
convergent series and Euler product
\begin{equation*}
 L(s,\chi)=\sum_{n=1}^{\infty}\frac{\chi(n)}{n^s}
 =\prod_{p\in\mathcal P}\left(1-\frac{\chi(p)}{p^s}\right)^{-1}.
\end{equation*}
It has a meromorphic continuation to $\C$: it is entire when $\chi$ is
nonprincipal, while for a principal character it has a single simple pole
at $s=1$.  In particular, for the principal primitive character
$\chi_1$ of conductor $1$, $L(s,\chi_1)=\zeta(s)$.  All zeros below are
counted with their orders of vanishing.  These standard properties are
recorded, for example, in \cite[Sections~11.5, 12.1, 12.6, and~12.10]{Apostol}.

We need two analytic inputs. The first bounds the zeros of the
$L$-functions associated with $\mathcal C_y$. The second expresses a
weighted sum over primes in terms of these zeros.

\subsection{The Lau--Wu estimates}
\begin{theorem}[Lau--Wu, \LauWuCitation]\label{thm:lau-wu}
There are absolute real constants \(c_1,c_2,C>0\) and \(y_0\geq100\) such that the following assertions hold for every real \(y\geq y_0\).  Enlarge \(y_0\), if necessary, so that \(H_y>\ee^4\).

\begin{enumerate}[label=\textup{(\alph*)}]
\item Put
\[
 \eta(y)=\frac{c_1\sqrt{\log H_y}}{2H_y}.
\]
The product $\displaystyle \prod_{\chi\in\mathcal C_y}L(s,\chi)$, for $s\in\C$, has at most one zero, counted with multiplicity, in
\[
 \Re s\geq1-2\eta(y),\qquad |\Im s|\leq H_y.
\]
If such a zero exists, it is real.

\item Define
\[
 \mathcal L_1(s;y)=
 \prod_{\substack{d\in\N\\d\mid P_y/2}}
 L(s,\chi_{d^*})L(s,\chi_{8d^*}),
 \qquad
 \mathcal L_4(s;y)=
 \prod_{\substack{d\in\N\\d\mid P_y/2}}
 L(s,\chi_{-4d^*})L(s,\chi_{-8d^*}).
\]
For \(\ell\in\{1,4\}\) and \(0\leq\alpha\leq1\), let
\(N_\ell(\alpha;y)\) count, with
multiplicity, the zeros of \(\mathcal L_\ell(s;y)\) in
\(\alpha\leq\Re s\leq1\) and \(|\Im s|\leq H_y\).
Put
\[
 k=\lfloor\sqrt{\log H_y}\rfloor,\quad k\geq2,
 \quad \eta_1(y)=\frac{k}{2(2^k-2)}.
\]
For real \(0<t<1/2\),
\[
 N_\ell(1-t;y)\leq
 \begin{cases}
 C\exp\!\left(\dfrac{c_2tH_y}{\sqrt{\log H_y}}+\dfrac12\log_2 H_y\right),
       &t\leq\eta_1(y),\\[1.2ex]
 C\exp\!\left(\dfrac{c_2tH_y}{\log(1/t)}\right),
       &t>\eta_1(y).
 \end{cases}
\]
\item If \(\chi\in\mathcal C_y\) has conductor \(Q\in\N\) and \(T\in\R\) satisfies \(T\geq2\), then
\[
 \#\{\rho\in\C:L(\rho,\chi)=0,\ 0<\Re\rho<1,\ |\Im\rho|\leq T\}
 \ll T\log(Q(T+2)),
\]
with multiplicity.  A possible zero at \(s=0\) contributes at most one.
\end{enumerate}
\end{theorem}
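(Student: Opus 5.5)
The statement is quoted from Lau and Wu, so the proof is a careful translation rather than new analysis. I would not re-derive the zero estimates. Instead I would match the paper's notation to that of \cite{LauWu}, check that every hypothesis there is met for $y\geq y_0$, and then read off (a)--(c).

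\textbf{Setting up the family.} First I would fix the dictionary between the two papers. Lau and Wu work with the quadratic characters attached to the squarefree divisors of the primorial $P_y$, split by parity of $\chi(-1)$ and by the power of $2$ in the conductor. The table in \Cref{sec:quadratic-characters} and \eqref{eq:quadratic-products} show that $\mathcal C_y$ consists of exactly the four twists $\chi_{d^*},\chi_{-4d^*},\chi_{8d^*},\chi_{-8d^*}$ with $d\mid P_y/2$. Since $\chi_D(-1)=\operatorname{sgn}D$, the even characters are collected in $\mathcal L_1$ and the odd ones in $\mathcal L_4$. This agrees with the grouping in \cite{LauWu}, so each of their products is, up to finitely many Euler factors at $2$, one of $\mathcal L_1,\mathcal L_4$. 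Those Euler factors do not vanish in $\Re s>0$, so they do not affect zero counts. I would also record that their parameter $\log P_y$ is our $H_y=\vartheta(y)$. By \Cref{lem:cheb}, $H_y\to\infty$, so enlarging $y_0$ gives $H_y>\ee^4$ and hence $k=\lfloor\sqrt{\log H_y}\rfloor\geq2$. This is what makes $\eta_1(y)$ well defined.

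\textbf{Parts (a) and (b).} For (a) I would invoke \cite[Lemma~4.1]{LauWu}. It is a Landau--Page type statement for the whole family: apart from at most one real simple zero, no zero of $\prod_{\chi\in\mathcal C_y}L(s,\chi)$ lies in $\Re s\geq1-c\sqrt{\log H_y}/H_y$ with $|\Im s|\leq H_y$. Choosing $c_1$ so that $2\eta(y)=c_1\sqrt{\log H_y}/H_y$ matches their width gives the stated form. The assertion ``counted with multiplicity'' is part of their conclusion, because an exceptional zero is simple and unique in the whole family.

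For (b) I would quote \cite[Lemma~4.3]{LauWu} together with their (5.7). These are zero-density bounds obtained from a large-sieve inequality for the quadratic characters in the family, using $k$-th power moments with $k\approx\sqrt{\log H}$. This explains the two regimes split at $t=\eta_1(y)=k/(2(2^k-2))$. The only work here is bookkeeping: check that their $N(1-t)$ uses the same height $H_y$, and absorb their constants into $C$ and $c_2$.

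\textbf{Part (c) and the main obstacle.} Part (c) is the classical Riemann--von Mangoldt estimate for a primitive character. By Jensen's formula applied to the completed $L$-function on discs of radius about $2$ centred at $2+it$, $|t|\leq T$, the number of zeros with $|\Im\rho-t|\leq1$ is $O(\log(Q(|t|+2)))$. Summing over $O(T)$ such discs gives $\ll T\log(Q(T+2))$, as in \cite{Apostol}. A zero at $s=0$ can occur only for the principal character, as the pole-compensating trivial zero, and it contributes at most one.

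I expect the main obstacle to be the normalization check in (a) and (b). The paper's region $\Re s\geq1-2\eta(y)$ and the exact piecewise exponents must agree with Lau--Wu's, after their constants are renamed and after the passage between ``conductor divides $4P_y$'' and their indexing by $d\mid P_y$. I would verify the four-row table against their definition term by term.
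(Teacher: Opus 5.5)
Your overall route is the paper's own. The paper gives no argument for this theorem beyond citing \cite[Lemmas~4.1 and~4.3; Equation~(5.7)]{LauWu}, and the count in (c) is the standard bound it takes from \cite[Theorem~5.24]{IwaniecKowalski} in the proof of \Cref{lem:explicit}. Your checks that $H_y=\vartheta(y)$, that $H_y>\ee^4$ forces $k\geq2$, and that $c_1$ is Lau--Wu's width constant are fine. However, two statements in your dictionary are false and need correcting.

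\textbf{The grouping in (b).} $\mathcal L_1$ and $\mathcal L_4$ are not the even/odd split. Since $\chi_D(-1)=\operatorname{sgn}D$ and $\operatorname{sgn}d^*=(-1)^{(d-1)/2}$, the factors $\chi_{d^*},\chi_{8d^*}$ of $\mathcal L_1$ have parity $(-1)^{(d-1)/2}$, and those of $\mathcal L_4$ have the opposite parity. For $d=3$, $\mathcal L_1$ contains the odd characters $\chi_{-3},\chi_{-24}$, and $\mathcal L_4$ contains the even characters $\chi_{12},\chi_{24}$. By \eqref{eq:quadratic-products}, the actual split is $\{\chi_{d^*},\chi_8\chi_{d^*}\}$ against their twists by $\chi_{-4}$. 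Equivalently, it is decided by whether the signed odd part of $D$ is $\equiv1$ or $\equiv3\pmod4$. A term-by-term match built on parity would therefore pair the wrong products. The repair is immediate. The table gives $\mathcal L_1\mathcal L_4=\prod_{\chi\in\mathcal C_y}L(s,\chi)$, so each $N_\ell(1-t;y)$ is at most the total zero count of this product. Hence bounds for the two factors of any splitting of $\mathcal C_y$, whatever grouping Lau--Wu actually use, give (b) with $C$ replaced by $2C$.

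\textbf{The zero at $s=0$ in (c).} Your remark here is backwards. The principal character has no zero at $0$, since $\zeta(0)=-1/2$. The zero at $0$ occurs exactly for the nonprincipal even characters. For these, $\Lambda_\chi$ is entire and $\Gamma(s/2)$ has a simple pole at $0$. Also $\Lambda_\chi(0)=\varepsilon_\chi\Lambda_{\overline\chi}(1)\neq0$ because $L(1,\chi)\neq0$, so $L(s,\chi)$ vanishes there to order exactly one. For odd $\chi$, the factor $\Gamma((s+1)/2)$ is finite and nonzero at $0$, so $L(0,\chi)\neq0$. This is the argument in Step~4 of the proof of \Cref{lem:explicit}. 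So ``at most one'' is true, but because the gamma pole is simple, not because only $\chi_1$ is involved. Across $\mathcal C_y$ there are $R_y/2-1$ such zeros rather than one. This is harmless in Step~3 of the main proof, where they are absorbed into $x^{1-\varepsilon}R_yH^2$, but it must be stated correctly.

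\textbf{The Jensen radius.} The inner disc in your Jensen argument must have radius larger than $\sqrt5$, for instance $5/2$ inside an outer disc of radius $3$. Zeros with $0<\Re\rho<1$ and $|\Im\rho-t|\leq1$ can lie at distance up to $\sqrt5$ from $2+it$, so a disc of radius about $2$ does not cover them.
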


\subsection{A smoothed explicit formula}
The next lemma relates weighted prime sums to zeros of $L(s,\chi)$.

\begin{lemma}\label{lem:explicit}
Define $h:(0,\infty)\to\R$ and
$W:\{s\in\C:\Re s>-35\}\to\C$ by
\[
 h(t)=t^{35}\ee^{-24t}(t-1)(2-t),
 \qquad
 W(s)=\int_0^\infty h(t)t^{s-1}\,dt.
\]
For a primitive Dirichlet character \(\chi:\Z\to\C\) of conductor \(Q\in\N\), define
\[
 T_x(\chi)=\sum_{p\in\mathcal P}(\log p)\chi(p)h(p/x),
 \qquad x\in\R,\quad x\geq2.
\]
Let $\Zzero_\chi$ be the multiset consisting of all nontrivial zeros
$\rho\in\C$ of $L(s,\chi)$ with $0<\Re\rho<1$, together with
the zero $\rho=0$ if $L(0,\chi)=0$.  Each zero is repeated according
to its order of vanishing. Then
\begin{equation}\label{eq:explicit}
 T_x(\chi)=\1_{\chi=\chi_1}\,xW(1)
 -\sum_{\rho\in\Zzero_\chi}W(\rho)x^\rho
 +O\!\left(\sqrt{x}\log^2(x+2)+x^{-1/2}\log(Q+2)\right).
\end{equation}
The sum over zeros is absolutely convergent.  Moreover, there is a real constant $C_h>0$, depending only on $h$, such that
\begin{equation}\label{eq:Wdecay}
 W(1)>0,
 \qquad
 |W(\sigma+i\tau)|\leq C_h\ee^{-|\tau|}
 \quad\left(\sigma,\tau\in\R,\ -\frac12\leq\sigma\leq2\right).
\end{equation}
\end{lemma}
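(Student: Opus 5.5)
Write \(\Lambda\) for the von Mangoldt function and set \(\psi(x,\chi;h)=\sum_{n\geq1}\Lambda(n)\chi(n)h(n/x)\). The plan is to derive \eqref{eq:explicit} first for this sum by Mellin inversion, and then to pass to \(T_x(\chi)\) by removing the prime powers. We begin with the properties of \(W\).

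Since \(h\) is a polynomial times \(\ee^{-24t}\) and vanishes to order \(35\) at \(0\), the integral defining \(W(s)\) converges for \(\Re s>-35\) and is holomorphic there. Expanding \(h\) as a combination of the monomials \(t^{35+j}\ee^{-24t}\) with \(j\in\{0,1,2\}\) gives
\[
 W(s)=\sum_{j=0}^2 c_j\,\Gamma(s+35+j)\,24^{-(s+35+j)}.
\]
Stirling's formula then shows that \(|W(\sigma+i\tau)|\) decays like \(|\tau|^{A}\ee^{-\pi|\tau|/2}\) uniformly for \(\sigma\in[-1/2,2]\). Since \(\pi/2>1\), this gives \eqref{eq:Wdecay}.

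For the positivity of \(W(1)\), we compute
\[
 W(1)=-\Gamma(38)24^{-38}+3\Gamma(37)24^{-37}-2\Gamma(36)24^{-36}
 =\Gamma(36)24^{-38}\bigl(-37\cdot36+3\cdot36\cdot24-2\cdot576\bigr).
\]
The bracket equals \(-1332+2592-1152=108>0\). The choice of parameters \(35\) and \(24\) is presumably made so that the weight concentrates near \((1,2)\) and this check succeeds.

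Next, Mellin inversion gives, for \(c=2\),
\[
 \psi(x,\chi;h)=\frac{1}{2\pi i}\int_{(c)}\Bigl(-\frac{L'}{L}(s,\chi)\Bigr)W(s)x^s\,ds .
\]
We shift the contour to \(\Re s=-1/2\). The only poles of \(-(L'/L)(s,\chi)W(s)x^s\) in the strip \(-1/2<\Re s<2\) are:
\begin{itemize}
 \item the pole at \(s=1\) when \(\chi=\chi_1\), with residue \(xW(1)\);
 \item the nontrivial zeros \(\rho\), each contributing \(-W(\rho)x^\rho\);
 \item a possible zero at \(s=0\) (a trivial zero for even \(\chi\), or for \(\zeta\) the pole of \(\zeta'/\zeta\) at \(s=0\)), contributing \(O(1)\) or a term \(W(0)x^0\). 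This term is either counted in \(\Zzero_\chi\) or absorbed into the error.
\end{itemize}
To justify moving the horizontal segments to infinity, we pick heights \(T_m\to\infty\) that avoid zeros by \(\gg1/\log(Q T_m)\). On such heights \(|L'/L|\ll\log^2(QT_m)\), and the factor \(\ee^{-|\tau|}\) from \eqref{eq:Wdecay} kills the segments. The zero counting in \Cref{thm:lau-wu}(c), namely \(O(T\log(QT))\) zeros up to height \(T\), together with the same exponential decay gives absolute convergence of \(\sum_\rho W(\rho)x^\rho\).

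On the line \(\Re s=-1/2\) we use the functional equation. This gives
\[
 \frac{L'}{L}(-\tfrac12+i\tau,\chi)\ll\log(Q(|\tau|+2)),
\]
so the remaining integral is \(\ll x^{-1/2}\log(Q+2)\). This is the second error term.

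The main obstacle is making this last bound and the \(s=0\) bookkeeping uniform in \(Q\) and in the parity of \(\chi\). For the line \(\Re s=-1/2\), we write \(L'/L\) via the functional equation as \(-\log(Q/\pi)\) plus gamma-factor logarithmic derivatives plus \(L'/L(3/2-i\tau,\bar\chi)\). The last term is \(O(1)\) because \(3/2\) lies in the region of absolute convergence. Near \(s=0\) we treat the cases directly:
\begin{itemize}
 \item for odd \(\chi\), \(L(0,\chi)\neq0\) and there is nothing to do;
 \item for even nonprincipal \(\chi\), the simple zero at \(0\) is exactly the \(\rho=0\) term in \(\Zzero_\chi\);
 \item for \(\chi_1\), the pole of \(\zeta'/\zeta\) at \(0\) gives a bounded residue \(O(1)\).
\end{itemize}

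Finally, we pass from \(\psi(x,\chi;h)\) to \(T_x(\chi)\). Since \(h\) is bounded and decays exponentially, the prime powers \(p^k\) with \(k\geq2\) contribute
\[
 \ll\sum_{k\geq2}\;\sum_{p^k\ll x\log x}\log p\ll\sqrt x\log^2(x+2),
\]
where larger \(n\) contribute negligibly because of the factor \(\ee^{-24n/x}\). For imprimitive issues nothing arises, since \(\chi\) is primitive. This gives \eqref{eq:explicit}.
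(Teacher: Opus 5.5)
Your proposal is correct and follows essentially the same route as the paper. Both compute $W$ explicitly via Gamma functions (Stirling for the decay, $W(1)=108\cdot 24^{-38}\Gamma(36)$), apply Mellin inversion on $\Re s=2$, shift to $\Re s=-1/2$ using the functional equation and zero-avoiding heights, and remove the prime powers at the end; the paper does the last step by partial summation and gets $O(\sqrt{x})$, and your cruder truncation bound also fits the stated error. One harmless slip: $\zeta(0)=-1/2\neq 0$, so $\zeta'/\zeta$ has no pole at $s=0$, and in the principal case there is no extra residue there to absorb.
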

\begin{proof}
We first evaluate the Mellin transform, then apply the residue theorem
to the weighted sum over prime powers, and finally remove the higher
prime powers.  All implied constants below are absolute, since $h$
is fixed.

\medskip\noindent
\textbf{Step 1. The Mellin transform.}
The function $h$ is positive on $(1,2)$ and nonpositive on
$(0,1]\cup[2,\infty)$.  It satisfies $h(t)=O(t^{35})$ as $t\downarrow0$
and decays exponentially as $t\to\infty$.  Hence the integral defining
$W(s)$, and its derivatives with respect to $s$, converge locally
uniformly for $\Re s>-35$.  Thus $W$ is holomorphic there.  Expanding
$(t-1)(2-t)=-t^2+3t-2$ and integrating each term gives
\begin{equation*}
\begin{aligned}
 W(s)&=-24^{-s-37}\Gamma(s+37)
       +3\,24^{-s-36}\Gamma(s+36)
       -2\,24^{-s-35}\Gamma(s+35)\\
 &=24^{-s-37}\Gamma(s+35)(108+s-s^2).
\end{aligned}
\end{equation*}
Here $\Gamma$ is Euler's gamma function.  In particular,
\begin{equation*}
 W(1)=108\,24^{-38}\Gamma(36)
     >0.
\end{equation*}
The vertical-strip form of Stirling's formula
\cite[pp.~87--88 and~293--295]{Olver}, applied to $s+35$, gives
\begin{equation}\label{eq:W-stirling}
 |W(\sigma+i\tau)|
 \ll(1+|\tau|)^{77/2}\ee^{-\pi|\tau|/2}
 \ll\ee^{-|\tau|}
 \qquad\left(-\tfrac12\leq\sigma\leq2,\ \tau\in\R\right).
\end{equation}
The estimate is uniform because $\Re(s+35)\in[34.5,37]$;
bounded $\tau$ are covered by continuity.  This proves
\eqref{eq:Wdecay}.

\medskip\noindent
\textbf{Step 2. A Mellin integral for the prime powers.}
For $n\in\N$, let $\Lambda(n)=\log p$ if $n=p^k$ for a prime
$p\in\mathcal P$ and an integer $k\geq1$, and let $\Lambda(n)=0$
otherwise.  Put
\begin{equation}\label{eq:weighted-von-mangoldt}
 S_x(\chi)=\sum_{n=1}^{\infty}\Lambda(n)\chi(n)h(n/x).
\end{equation}
This series is absolutely convergent.  Write $\displaystyle \int_{(c)}$ for upward
integration along $s=c+i\tau$, $\tau\in\R$, with $c\in\R$.
Mellin inversion \cite[Chapter~4, Appendix; Equation~(4.106)]{IwaniecKowalski}
gives
\begin{equation*}
 h(t)=\frac1{2\pi i}\int_{(2)}W(s)t^{-s}\,ds
 \qquad(t\in\R,\ t>0).
\end{equation*}
Its hypotheses hold because $h$ is continuous, of bounded variation,
and of Mellin type $(-35,\infty)$: the displayed asymptotics give
$h(t)=O(t^{35})$ at $0$ and $h(t)=O(t^{37}\ee^{-24t})$ at infinity,
while $\displaystyle\int_0^\infty|h'(t)|\,dt<\infty$.  The inversion integral is
absolutely convergent by \eqref{eq:W-stirling}.  Substitution into
\eqref{eq:weighted-von-mangoldt} is justified by the finite majorant
\begin{equation*}
 x^2\sum_{n=1}^{\infty}\frac{\Lambda(n)}{n^2}
       \int_{-\infty}^{\infty}|W(2+i\tau)|\,d\tau<\infty.
\end{equation*}
Since $$-L'/L(s,\chi)=\sum_{n=1}^{\infty}\Lambda(n)\chi(n)n^{-s}$$
for $\Re s>1$, it follows that
\begin{equation}\label{eq:mellin}
 S_x(\chi)=\frac1{2\pi i}\int_{(2)}
       -\frac{L'}{L}(s,\chi)W(s)x^s\,ds.
\end{equation}
Here $L'$ denotes differentiation with respect to $s\in\C$.

\medskip\noindent
\textbf{Step 3. Bounds for the contour shift.}
Let $a_\chi\in\{0,1\}$ satisfy $\chi(-1)=(-1)^{a_\chi}$, and let
$\overline\chi(n)=\overline{\chi(n)}$ for $n\in\Z$.
The functional equation for a primitive character of conductor $Q$ is
\begin{equation*}
\begin{aligned}
 \Lambda_\chi(s)&=\left(\frac Q\pi\right)^{(s+a_\chi)/2}
       \Gamma\!\left(\frac{s+a_\chi}{2}\right)L(s,\chi),\\
 \Lambda_\chi(s)&=\varepsilon_\chi\Lambda_{\overline\chi}(1-s),
       \qquad\varepsilon_\chi\in\C,\quad|\varepsilon_\chi|=1;
\end{aligned}
\end{equation*}
see \cite[Theorem~4.15; Equations~(4.73)--(4.75)]{IwaniecKowalski}.
For $\chi=\chi_1$ and $Q=1$, this is the meromorphic functional
equation of $\zeta(s)$.  Thus it also applies in the principal case.
Set $\psi_\Gamma(z)=\Gamma'(z)/\Gamma(z)$ for
$z\in\C\setminus\{0,-1,-2,\ldots\}$.  Logarithmic differentiation
gives the meromorphic identity
\begin{equation*}
\begin{aligned}
 \frac{L'}{L}(s,\chi)
 ={}-\frac{L'}{L}(1-s,\overline\chi)-\log\frac Q\pi
 -\frac12\psi_\Gamma\!\left(\frac{s+a_\chi}{2}\right)
  -\frac12\psi_\Gamma\!\left(\frac{1-s+a_\chi}{2}\right).
\end{aligned}
\end{equation*}
On $s=-1/2+i\tau$, the reflected logarithmic derivative has an
absolutely convergent Dirichlet series on $\Re(1-s)=3/2$ and is
bounded by $$\sum_{n=1}^\infty\Lambda(n)n^{-3/2}<\infty.$$
The digamma estimate \cite[pp.~87--88 and~293--295]{Olver} bounds the other
terms by $O(\log(|\tau|+2))$.  For bounded $\tau$, their arguments
stay a fixed distance from the poles of $\psi_\Gamma$.
Consequently,
\begin{equation*}
 \left|\frac{L'}{L}\left(-\frac12+i\tau,\chi\right)\right|
 \ll\log\bigl(Q(|\tau|+2)\bigr)\qquad(\tau\in\R),
\end{equation*}
and the new vertical integral satisfies
\begin{equation*}
 \left|\frac1{2\pi i}\int_{(-\frac{1}{2})}
       -\frac{L'}{L}(s,\chi)W(s)x^s\,ds\right|
 \ll x^{-\frac{1}{2}}\int_{-\infty}^{\infty}
       \ee^{-|\tau|}\log\bigl(Q(|\tau|+2)\bigr)\,d\tau
 \ll x^{-\frac{1}{2}}\log(Q+2).
\end{equation*}

We also justify the disappearance of the horizontal sides.
Let $\mathcal Z_\chi^\circ$ denote the multiset of nontrivial zeros
$\rho$ with $0<\Re\rho<1$.  The standard local zero estimates are
\begin{align}
 \#\{\rho\in\mathcal Z_\chi^\circ:|\Im\rho-\tau|\leq1\}
 &\ll\log\bigl(Q(|\tau|+2)\bigr),
 \label{eq:local-zero-count}\\
 \frac{L'}{L}(s,\chi)
 &=\sum_{\substack{\rho\in\mathcal Z_\chi^\circ\\|s-\rho|<1}}
       \frac1{s-\rho}
       +O\!\left(\log\bigl(Q(|\tau|+2)\bigr)\right),
 \label{eq:local-log-derivative}
\end{align}
where $s=\sigma+i\tau$, $-1/2\leq\sigma\leq2$, $|\tau|\geq2$,
and $s$ is not a zero in \eqref{eq:local-log-derivative}.
Zeros are counted with multiplicity.  These are
\cite[Proposition~5.7; Equations~(5.27)--(5.28)]{IwaniecKowalski}:
primitive Dirichlet $L$-functions have degree one and analytic
conductor comparable to $Q(|\tau|+3)$, as verified in
\cite[Section~5.9]{IwaniecKowalski}.  Thus the constants are uniform
in $Q$ and $\chi$.

For each integer $j\geq3$, let $M_j$ count the zeros in
$\mathcal Z_\chi^\circ$ with $j-1\leq|\Im\rho|\leq j+2$.
Covering these two intervals by finitely many intervals of length
$2$ and using \eqref{eq:local-zero-count} gives
$M_j\ll\log(Q(j+2))$.  Set $\delta_j=1/(4(M_j+1))$.
Intervals of radius $\delta_j$ about the $M_j$ absolute ordinates
remove length at most $2M_j\delta_j<1/2$ from $[j,j+1]$.
Choose $T_j$ outside them.  Then both $T_j$ and $-T_j$ have distance
at least $\delta_j$ from every zero ordinate; ordinates outside the
specified intervals have distance at least $1$. 
\eqref{eq:local-zero-count}--\eqref{eq:local-log-derivative} yield
\begin{equation*}
 \sup_{-1/2\leq\sigma\leq2}
 \left|\frac{L'}{L}(\sigma\pm iT_j,\chi)\right|
 \ll\log^2(Q(j+2)).
\end{equation*}
Each horizontal integral in the rectangle with vertical sides
$\Re s=2,-1/2$ and heights $\pm T_j$ is therefore
\begin{equation*}
 O\!\left(x^2\ee^{-j}\log^2(Q(j+2))\right)=o(1)
 \qquad(j\to\infty),
\end{equation*}
with $x$ and $Q$ fixed.

\medskip\noindent
\textbf{Step 4. The residues and the zero sum.}
The only pole of $L(s,\chi)$ is the simple pole at $s=1$ when
$\chi=\chi_1$.  Dirichlet $L$-functions have no zeros on
$\Re s=1$; for nonprincipal primitive characters this is
\cite[Theorem~5.26]{IwaniecKowalski}, while for $\chi_1$ it is the
classical zero-free line for $\zeta(s)$, recorded in
\cite[Equation~(4.17.2), p.~89]{Titchmarsh} (with its simple pole at $s=1$).
The functional equation then shows that their only
possible zero on $\Re s=0$ is at $s=0$.  For a nonprincipal even
primitive character this zero is simple: the gamma factor has a
simple pole at $0$, whereas $\Lambda_{\overline\chi}(1)\ne0$.
For an odd primitive character the gamma factor is finite and
nonzero at $0$, so $L(0,\chi)\ne0$.  In the principal case,
$\zeta(0)=-1/2$.  The remaining trivial zeros lie at negative
integers, outside the rectangle.  Thus the poles of the integrand
in \eqref{eq:mellin} inside it are exactly the principal pole at $1$
and the zeros in $\Zzero_\chi$ with $|\Im\rho|<T_j$.

At the principal pole the residue of $-L'/L$ is $+1$.
At a zero $\rho$ of order $m\in\N$, it is $-m$, since locally
$L(s,\chi)=(s-\rho)^m g(s)$ with $g$ holomorphic and $g(\rho)\ne0$.
Their contributions are therefore $xW(1)$ and
$-mW(\rho)x^\rho$, respectively, including the possible zero at $0$.
There are no poles contributed by $W$, which is holomorphic on
the whole rectangle.

To pass to the limit, write
$N_\chi(T)=\#\{\rho\in\mathcal Z_\chi^\circ:|\Im\rho|\leq T\}$,
with multiplicity, for real $T\geq2$.
The bound $N_\chi(T)\ll T\log(Q(T+2))$ is the Dirichlet case of
\cite[Theorem~5.24]{IwaniecKowalski}.  By \eqref{eq:Wdecay},
\begin{equation*}
 \sum_{\rho\in\mathcal Z_\chi^\circ}|W(\rho)x^\rho|
 \ll x\sum_{j=0}^{\infty}\ee^{-j}N_\chi(j+2)
 \ll x\sum_{j=0}^{\infty}\ee^{-j}(j+2)\log(Q(j+4))<\infty.
\end{equation*}
The possible term at $0$ is finite.  The residue theorem, followed
by $j\to\infty$, now gives
\begin{equation}\label{eq:vonmangoldt}
 S_x(\chi)=\1_{\chi=\chi_1}xW(1)
 -\sum_{\rho\in\Zzero_\chi}W(\rho)x^\rho
 +O\!\left(x^{-1/2}\log(Q+2)\right).
\end{equation}

\medskip\noindent
\textbf{Step 5. Removing the higher prime powers.}
For real $Y>0$, define
\begin{equation*}
 A(Y)=\sum_{\substack{p\in\mathcal P,\ k\in\N\\k\geq2,\ p^k\leq Y}}
             \log p.
\end{equation*}
This is zero for $Y<4$.  The estimate $\vartheta(u)\ll u$ for
all real $u>0$ follows from \Cref{lem:cheb}, after increasing
the constant on a bounded interval.  For $Y\geq4$ it gives
\begin{equation*}
 A(Y)=\sum_{k=2}^{\lfloor\log Y/\log2\rfloor}\vartheta(Y^{1/k})
 \ll Y^{1/2}+Y^{1/3}\log Y\ll Y^{1/2}.
\end{equation*}
Indeed, every term with $k\geq3$ is $O(Y^{1/3})$, and
$\log Y\leq(6/\ee)Y^{1/6}$ for $Y\geq1$, by maximizing
$(\log Y)/Y^{1/6}$.  Since $|\chi(n)|\leq1$, partial summation yields
\begin{equation}\label{eq:prime-power-removal}
\begin{aligned}
 & |S_x(\chi)-T_x(\chi)|
 \leq\int_0^\infty|h(t/x)|\,dA(t)
 \leq\frac1x\int_0^\infty A(t)|h'(t/x)|\,dt \\
 & \hspace{6cm}
 \ll\sqrt x\int_0^\infty u^{1/2}|h'(u)|\,du
 \ll\sqrt x.
\end{aligned}
\end{equation}
Here $|h|$ is absolutely continuous with
$|(|h|)'|\leq|h'|$ almost everywhere.  The boundary terms vanish
by $A(t)=0$ near $0$ and the exponential decay at infinity.
The last integral is finite because $h'(u)=O(u^{34})$ near $0$
and $h'(u)=O(u^{37}\ee^{-24u})$ as $u\to\infty$.
Combining \eqref{eq:vonmangoldt} and \eqref{eq:prime-power-removal}
proves \eqref{eq:explicit}.
\end{proof}

\section{Omission of a possible exceptional conductor prime}
\label{sec:omission}

\Cref{thm:lau-wu}(a) allows at most one exceptional real zero.
We exclude its character by omitting an odd prime divisor of its conductor.
The remaining characters then have no zeros in the region needed for the
main proof. 

\begin{lemma}\label{lem:deletion}
For all sufficiently large real $y$, there is a set
\[
 E_y\subseteq\{r\in\mathcal P:r\leq y,\ r>2\},
 \qquad |E_y|\leq1,
\]
such that, with
\[
 P_y^*={P_y}/{\prod_{r\in E_y}r},
\]
and
\[
 \mathcal D_y=
 \left\{\chi_D:
 D\text{ is a fundamental discriminant},
 |D|\mid4P_y^*
\right\},
\]
$P_y^*\in\N$, and every $L(s,\chi)$ with $\chi\in\mathcal D_y$ is
zero-free in
\[
 \Re s\geq1-\eta(y),\qquad |\Im s|\leq H.
\]
\end{lemma}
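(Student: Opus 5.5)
We apply \Cref{thm:lau-wu}(a) and split into two cases. Since $\eta(y)<2\eta(y)$, the region $\Re s\geq1-\eta(y)$, $|\Im s|\leq H$ lies inside the region of part~(a). So the product $\prod_{\chi\in\mathcal C_y}L(s,\chi)$ has at most one zero there, and any such zero is real. If there is no such zero, take $E_y=\emptyset$. Then $P_y^*=P_y$ and $\mathcal D_y=\mathcal C_y$, and every factor is zero-free in the required region, since a zero of a single factor is a zero of the product.

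Now suppose there is exactly one zero $\beta$, which is real with $1-2\eta(y)\leq\beta<1$, and that it is a zero of $L(s,\chi)$ for some $\chi=\chi_D\in\mathcal C_y$. Multiplicity counting shows that $\chi$ is the only member of $\mathcal C_y$ vanishing in the region. First, $\chi\neq\chi_1$, because $\zeta$ has no real zeros in $(0,1)$. This follows from the standard identity $(1-2^{1-s})\zeta(s)=\sum(-1)^{n-1}n^{-s}>0$ for $0<s<1$. So we only need to exclude $\chi$ by deleting one odd prime dividing $D$.

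The key point is that $|D|$ has an odd prime factor. By the table in \Cref{sec:quadratic-characters}, $D=d^*,-4d^*,8d^*,-8d^*$ with $d\mid P_y/2$. If $d=1$, then $\chi\in\{\chi_{-4},\chi_8,\chi_{-8}\}$. These are three fixed characters, each with a real zero-free neighbourhood $[1-\delta_0,1)$ for some absolute $\delta_0>0$, because $L(\sigma,\chi)$ is continuous and $L(1,\chi)\neq0$. (One can also check $L(\sigma,\chi)>0$ on $(0,1)$ directly for these characters via alternating-series arguments.) Since $2\eta(y)\to0$ as $y\to\infty$, we get $1-2\eta(y)>1-\delta_0$ for all large $y$, so these characters cannot carry the exceptional zero. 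Hence $d>1$, and we may choose an odd prime $r\mid d$, with $r\leq y$ because $d\mid P_y$. Put $E_y=\{r\}$.

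Finally, $P_y^*=P_y/r\in\N$. Every $\chi_{D'}\in\mathcal D_y$ satisfies $|D'|\mid4P_y^*$, so it lies in $\mathcal C_y$ and $r\nmid|D'|$. In particular $\chi_{D'}\neq\chi$, because $D\mapsto\chi_D$ is a bijection and $r\mid|D|$. So each $L(s,\chi_{D'})$ is zero-free in the region: its zeros there would be zeros of the product different from the unique one, which belongs to $L(s,\chi)$. The main obstacle is this identification step, namely making sure the unique zero is attributed to a single character and that its conductor is not a power of $2$. Both are settled by the uniqueness in part~(a) and by the fixed-character argument above, which is where "sufficiently large $y$" is used.
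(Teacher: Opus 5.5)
Your proof is correct and takes the paper's route: split on whether the region of \Cref{thm:lau-wu}(a) contains its (at most one, real) zero, rule out the characters of $2$-power conductor using a fixed zero-free neighbourhood of $s=1$, and delete an odd prime dividing the exceptional conductor. The differences are cosmetic: you handle $\chi_1$ through $(1-2^{1-s})\zeta(s)=\sum(-1)^{n-1}n^{-s}>0$ where the paper uses the pole at $s=1$, and you should state, as the paper does, that $\beta<1$ follows from the Euler product for $\Re s>1$ together with $L(1,\chi)\neq0$.
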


\begin{proof}
Put
\begin{equation*}
 \Omega_y=\{s\in\C:\Re s\geq1-2\eta(y),\ |\Im s|\leq H\}.
\end{equation*}
By \Cref{thm:lau-wu}(a), the $L$-functions for $\chi\in\mathcal C_y$
have at most one zero in $\Omega_y$, counted with multiplicity.
If this zero exists, it is real.  The four characters in
$\mathcal C_y$ whose discriminants have no odd prime factor are
\[
 \chi_1,\qquad\chi_{-4},\qquad\chi_8,\qquad\chi_{-8}.
\]
For the three nonprincipal characters, $L(1,\chi)\ne0$ by \cite[Theorem~6.20, p.~141]{Apostol}; for the
principal character, $L(s,\chi_1)=\zeta(s)$ has a simple pole at $s=1$.
Consequently, for some fixed $\delta>0$, none of these four $L$-functions
has a zero in the disk $|s-1|<\delta$ (for $\chi_1$, use that
$(s-1)\zeta(s)$ is holomorphic and nonzero at $s=1$).  Since
$\eta(y)\to0$, we may assume
\[
 2\eta(y)<\min(\delta,1/2).
\]

If no $L(s,\chi)$ with $\chi\in\mathcal C_y$ has a zero in
$\Omega_y$, set $E_y=\varnothing$.
Then $P_y^*=P_y$ and $\mathcal D_y=\mathcal C_y$.
The conclusion follows because
$\{s\in\C:\Re s\geq1-\eta(y),\ |\Im s|\leq H\}\subseteq\Omega_y$.

Suppose now that some $L(s,\chi)$ with $\chi\in\mathcal C_y$ has a zero in $\Omega_y$.  By
\Cref{thm:lau-wu}(a), it is a unique real zero $\beta$ of a unique
character $\chi_*\in\mathcal C_y$, counted with multiplicity.  We have
$\beta\geq1-2\eta(y)>0$; the Euler product gives zero-freeness for
$\Re s>1$, and the nonvanishing at $s=1$ rules out $\beta=1$.  Hence
\[
 0<\beta<1,\qquad 1-\beta\leq2\eta(y)<\delta.
\]
Thus $|\beta-1|<\delta$, and the disk exclusion above shows that
$\chi_*$ is not one of the four characters with no odd prime factor in
its discriminant.

Write $\chi_*=\chi_{D_*}$. By the preceding paragraph, $|D_*|$
has an odd prime divisor; choose one and denote it by $r_y$.
Since $|D_*|\mid4P_y$ and $r_y>2$, we have $r_y\mid P_y$, hence
$r_y\leq y$. Set $E_y=\{r_y\}$ and $P_y^*=P_y/r_y$.
The integer $P_y$ is squarefree, so $r_y\nmid4P_y^*$.
Every $\chi_D\in\mathcal D_y$ satisfies $|D|\mid4P_y^*$ and therefore
$r_y\nmid|D|$. In contrast, $r_y\mid|D_*|$, so
$\chi_*\notin\mathcal D_y$.
The zero $\beta$ belonged only to $L(s,\chi_*)$, and no other
$L$-function in $\mathcal C_y$ has a zero in $\Omega_y$.
Since $\mathcal D_y\subseteq\mathcal C_y\setminus\{\chi_*\}$, every
$L(s,\chi)$ with $\chi\in\mathcal D_y$ is zero-free in $\Omega_y$.
In particular, it has no zero with $\Re s\geq1-\eta(y)$ and
$|\Im s|\leq H$, as required.  Finally, $P_y^*\in\mathbb N$ because $r_y$ is a prime factor of
$P_y$.
\end{proof}

\section{The finite-field construction}
\label{sec:finite-field}

The next lemma constructs a set $S\subset\Fq^\times$ such that
$s^2+t^2$ is a quadratic nonresidue for every $s,t\in S$.
We obtain $S$ by taking square roots of selected integers
$u\equiv1\pmod4$. The choice of these integers also accounts for
the possible omitted prime in \Cref{lem:deletion}.

\begin{lemma}\label{lem:residue}
Let $q\equiv5\pmod8$ be prime, let $K\in\N$ satisfy $1\leq K<q$,
and let $E$ be empty or a singleton containing an odd prime $r\leq K$.
Suppose
\begin{equation}\label{eq:residue-hypothesis}
 \leg{\ell}{q}=1
 \qquad(\ell\leq K,\ \ell\text{ an odd prime},\ \ell\notin E).
\end{equation}
Then there is a set $S\subset\Fq^\times$ such that
\begin{equation}\label{eq:Ssize}
 |S|\geq\frac K6-1
\end{equation}
and
\begin{equation}\label{eq:Sproperty}
 \leg{s^2+t^2}{q}=-1\qquad(s,t\in S).
\end{equation}
Every multiplicative translate $cS$, $c\in\Fq^\times$, has the same
property.
\end{lemma}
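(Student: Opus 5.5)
We follow the construction sketched in \Cref{sec:proof-outline}: first build a set $U$ of integers, then take square roots modulo $q$.

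\textbf{Step 1: the starting set.} Let $U_0=\{u\in\Z:1\leq u\leq K,\ u\equiv1\pmod4\}$, so $|U_0|\geq K/4-1$. If $E=\varnothing$, put $U=U_0$. If $E=\{r\}$, delete the multiples of $r$ from $U_0$. Since $r\geq3$, at most $K/(4r)+1\leq K/12+1$ elements are removed, and the remainder $U_0'$ has $|U_0'|\geq K/6-2$.

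\textbf{Step 2: avoiding the prime $r$.} Partition the nonzero residues modulo $r$ into the $(r-1)/2$ pairs $\{a,-a\}$. From each pair keep the class containing more elements of $U_0'$, breaking ties arbitrarily. The retained set $U$ satisfies $|U|\geq|U_0'|/2\geq K/12-1$. (Counting $U_0$ more carefully gives $K/12-1/2$, but any bound $|U|\geq K/12-1/2$ suffices for \eqref{eq:Ssize}.) For $u,v\in U$ the classes of $u$ and $v$ are never negatives of each other, and neither is zero. Hence $u+v\not\equiv0\pmod r$.

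\textbf{Step 3: the residue property.} Take $u,v\in U$, allowing $u=v$. Put $m=(u+v)/2$. Since $u+v\equiv2\pmod4$, $m$ is an odd integer with $1\leq m\leq K<q$. Every prime factor $\ell$ of $m$ is odd, at most $K$, and different from $r$, so $\leg{\ell}{q}=1$ by \eqref{eq:residue-hypothesis}. The same applies to $u$ itself, since $r\nmid u$. Therefore $\leg{u}{q}=\leg{m}{q}=1$, and both are nonzero modulo $q$ because $u,m<q$. Since $q\equiv5\pmod8$, we have $\leg{2}{q}=-1$, and so
\[
 \leg{u+v}{q}=\leg{2}{q}\leg{m}{q}=-1.
\]

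\textbf{Step 4: the set $S$.} Define $S$ to consist of both square roots $\pm\sqrt u\in\Fq^\times$ of each $u\in U$. Distinct $u\in U$ lie in $[1,K]\subset[1,q)$, so they give distinct residues, and their square-root pairs are disjoint. Hence $|S|=2|U|\geq K/6-1$. For $s,t\in S$ we have $s^2+t^2\equiv u+v$ for some $u,v\in U$, which gives \eqref{eq:Sproperty}. Finally, for $c\in\Fq^\times$,
\[
 \leg{c^2(s^2+t^2)}{q}=\leg{s^2+t^2}{q},
\]
so every translate $cS$ has the same property.

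\textbf{Main obstacle.} The delicate part is the counting in Steps 1 and 2, in particular confirming the constant. The bound $|S|\geq K/6-1$ holds once $|U|\geq K/12-1/2$. Getting this requires care with the floor in $|U_0|$ and with the count of removed multiples of $r$: one uses $r\geq3$ and counts the multiples of $r$ in the progression $1\pmod4$ as $\lfloor (K+c)/(4r)\rfloor$-type terms. For small $K$ the inequality can be checked directly, and there it is trivially true when $K/6-1\leq0$.
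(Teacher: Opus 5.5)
Your construction matches the paper's proof step for step. You use the same set $U_0$ of integers $u\equiv1\pmod4$ in $[1,K]$, delete the multiples of $r$, and keep the larger class from each pair $\{a,-a\}$. You then use $m=(u+v)/2$ with $\leg{2}{q}=-1$, and pass to square roots and translates. Steps 3 and 4 are correct.

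The only defect is in the counting as you actually wrote it. From $|U_0|\geq K/4-1$ you obtain $|U|\geq K/12-1$, which yields only $|S|\geq K/6-2$. So the inequality $|S|\geq K/6-1$ asserted in Step 4 does not follow from what you proved, and the ``more careful count'' you defer is never carried out.

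The fix is one line, and it is exactly what the paper does. Use $|U_0|=\lfloor (K+3)/4\rfloor\geq K/4$, since the floor loses at most $3/4$. Combined with your bound of at most $K/(4r)+1$ removed multiples of $r$ and $r\geq3$, this gives $|U_0'|\geq K/4-K/(4r)-1\geq K/6-1$. Halving then gives $|U|\geq K/12-1/2$, and the same bound holds trivially when $E=\varnothing$. Hence $|S|=2|U|\geq K/6-1$ for every $K$. No separate check for small $K$ is needed, and the ``main obstacle'' you flag disappears.
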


\begin{proof}
We divide the proof into three steps.

\medskip\noindent
\textbf{Step 1. Selecting the integers.}
Put
\begin{equation}\label{eq:U-initial}
 U_0=\{u\in\N:1\leq u\leq K,\ u\equiv1\pmod4\},
 \qquad |U_0|=\left\lfloor\frac{K+3}{4}\right\rfloor\geq\frac K4.
\end{equation}
If $E=\varnothing$, take $U=U_0$.  Suppose $E=\{r\}$.  Since
$\gcd(4,r)=1$, the simultaneous conditions $u\equiv1\pmod4$ and
$u\equiv0\pmod r$ specify one residue class modulo $4r$.  Thus the
number of such integers in $[1,K]$ is at most $K/(4r)+1$.  Set
$V=\{u\in U_0:r\nmid u\}$; then
\begin{equation*}
 |V|\geq\frac K4-\frac K{4r}-1.
\end{equation*}

The nonzero residue classes modulo $r$ split into disjoint pairs
$\{a,-a\}$.  Indeed, $a=-a$ would give $2a=0$, hence $a=0$,
because $r$ is odd.  For each pair \(\{a,-a\}\) of nonzero residue classes modulo \(r\), choose the class represented by more elements of \(V\). If the two classes are represented equally often, choose either one.
Let $A\subset(\Z/r\Z)^\times$ be the retained classes and put
$U=\{u\in V:u\bmod r\in A\}$.  The construction gives
\begin{equation}\label{eq:U-no-opposites}
 A\cap(-A)=\varnothing,
 \qquad r\nmid u,\quad r\nmid(u+v)\qquad(u,v\in U).
\end{equation}
At least half of $V$ is retained.  Since
$r\geq3$, we obtain
\begin{equation*}
 |U|\geq\frac{|V|}{2}
 \geq\frac K8\left(1-\frac1r\right)-\frac12
 \geq\frac K{12}-\frac12.
\end{equation*}
This last bound also holds when $E=\varnothing$, by
\eqref{eq:U-initial}.

\medskip\noindent
\textbf{Step 2. The quadratic characters of the sums.}
Every $u\in U$ is a positive odd integer at most $K$, and none of its
prime factors belongs to $E$.  By \eqref{eq:residue-hypothesis} and
multiplicativity,
\begin{equation}\label{eq:U-residues}
 \leg{u}{q}
 =\prod_{\ell\mid u\atop \ell\in\mathcal P}\leg{\ell}{q}^{v_\ell(u)}=1.
\end{equation}
For $u,v\in U$, define
\begin{equation*}
 w=\frac{u+v}{2},\qquad 1\leq w\leq K,\qquad w\equiv1\pmod2.
\end{equation*}
Because \(K<q\) and $q\equiv5\pmod8$, we have \(q\nmid w\) and \(q\nmid u+v=2w\).  If $E=\{r\}$,
then $r\nmid w$, since $r\mid w$ would contradict
\eqref{eq:U-no-opposites}.  Thus all prime factors of $w$ also satisfy
\eqref{eq:residue-hypothesis}.  Consequently $\displaystyle \leg{w}{q}=1$.
The supplementary law $$\leg{2}{q}=(-1)^{(q^2-1)/8}=-1,$$ valid
because $q\equiv5\pmod8$, now gives
\begin{equation}\label{eq:U-sum-nonresidue}
 \leg{u+v}{q}=\leg{2}{q}\leg{w}{q}=-1
 \qquad(u,v\in U).
\end{equation}
In particular, the sum is nonzero modulo $q$: $w<q$ and $q$ is odd.

\medskip\noindent
\textbf{Step 3. Taking square roots and multiplicative translates.}
Regard $U\subset[1,K]$ as a subset of $\Fq^\times$; this is an
injective identification since $K<q$.  Define
\begin{equation*}
 S=\{s\in\Fq^\times:s^2\in U\}.
\end{equation*}
By \eqref{eq:U-residues}, each $u\in U$ has exactly two square roots
in $\Fq^\times$.  They are distinct because $q$ is odd, and roots
of different $u$ are disjoint.  Therefore
\begin{equation*}
 |S|=2|U|\geq\frac K6-1.
\end{equation*}
For $s,t\in S$, apply \eqref{eq:U-sum-nonresidue} to $u=s^2$ and
$v=t^2$ to obtain \eqref{eq:Sproperty}.  This includes $s=t$ and
$s=-t$, since the preceding argument allowed $u=v$.  Finally,
for $c\in\Fq^\times$,
\begin{equation}\label{eq:S-translate}
 \leg{(cs)^2+(ct)^2}{q}
 =\leg{c^2}{q}\leg{s^2+t^2}{q}=-1
 \qquad(s,t\in S),
\end{equation}
because $c^2$ is a nonzero square.  This proves the assertion for $cS$.
\end{proof}

\section{Proof of the main theorem}
\label{sec:main-proof}

\begin{proof}[Proof of \Cref{thm:main}]
We divide the proof into six steps and use the notation introduced above.

\medskip\noindent
\textbf{Step 1. The scales and the size of the character family.}
Let $x$ be a sufficiently large real number and put
\begin{equation*}
 L=\log x,\qquad J=\log L,\qquad T=\log J,
 \qquad y=\kappa LT,\qquad\varepsilon=J^{-1/2}.
\end{equation*}
We take $x$ large enough that $y$ exceeds the thresholds in
\Cref{lem:cheb,thm:lau-wu,lem:deletion} and $0<\varepsilon<1/2$.
Use the notation $P_y$, $H=\log P_y$, and $\mathcal C_y$ from
\eqref{eq:PyH}--\eqref{eq:Cy}, and choose $E_y$, $P_y^*$, and
$\mathcal D_y$ by \Cref{lem:deletion}.  Then
\begin{equation}\label{eq:scale-bounds}
 ay\leq H\leq by,\qquad
 \log H=J+\log T+O(1)\sim J.
\end{equation}
Indeed, $H\asymp y$ and $\log y=J+\log T+\log\kappa$.
By \eqref{eq:character-family-size} and the upper bound for $\pi(y)$
in \Cref{lem:cheb},
\begin{equation}\label{eq:family-log-size}
 R_y=|\mathcal C_y|=2^{\pi(y)+1},\qquad
 \log R_y=O\!\left(\frac{LT}{J}\right)=o(\varepsilon L).
\end{equation}
The last relation follows from $T/\sqrt J\to0$.  In particular,
$R_y=\exp(o(L))$, $H=O(LT)$, and $\log H=O(J)=o(L)$, so
\begin{equation}\label{eq:family-small}
 R_y\left(\sqrt{x}\log^2(x+2)+x^{-1/2}H\right)=o(x).
\end{equation}

\medskip\noindent
\textbf{Step 2. Zeros close to the line $\Re s=1$.}
For $0\leq t<1/2$, let $M(t)$ count, with multiplicity and separately
for every $\chi\in\mathcal D_y$, the zeros
$\rho=\beta+i\gamma\in\Zzero_\chi$ satisfying
$\beta\geq1-t$ and $|\gamma|\leq H$.  Put
\begin{equation*}
 \eta=\frac{c_1\sqrt{\log H}}{2H},\qquad
 M(t)=0\quad(0\leq t\leq\eta).
\end{equation*}
The equality is the conclusion of \Cref{lem:deletion}.  Since
$\eta/\varepsilon\asymp J/(LT)\to0$, we also have $0<\eta<\varepsilon$
for large $x$.  The four discriminant types listed in \Cref{sec:quadratic-characters}
partition $\mathcal C_y$ between the two products in
\Cref{thm:lau-wu}(b).  Thus
\begin{equation}\label{eq:M-comparison}
 M(t)\leq N_1(1-t;y)+N_4(1-t;y).
\end{equation}
Here $1-t>1/2$, so the pole at $1$ and the possible zero at $0$
do not affect the zero counts.

We apply the two bounds for $N_\ell(1-t;y)$ in \Cref{thm:lau-wu}(b).
By \eqref{eq:scale-bounds}, $T/\sqrt J\to0$, and
$\log(1/\varepsilon)=T/2$,
\begin{equation}\label{eq:density-coeff}
 \frac{c_2H}{\sqrt{\log H}}\leq\frac L4,
 \qquad
 \frac{c_2H}{\log(1/\varepsilon)}
 \leq2c_2b\kappa L
 =\frac{c_2b}{16(1+c_2b)}L\leq\frac L4
\end{equation}
for all sufficiently large $x$.  If $0<t\leq\varepsilon$, then
$\log(1/t)\geq\log(1/\varepsilon)$. For $t\leq\eta_1(y)$,
the theorem gives $N_\ell(1-t;y)\leq C\sqrt{\log H}\exp(tL/4)$.
For $t>\eta_1(y)$, it gives $N_\ell(1-t;y)\leq C\exp(tL/4)$.
Adding the bounds for $\ell=1,4$ in \eqref{eq:M-comparison} and
enlarging $C$, we obtain
\begin{equation*}
 M(t)\leq C\sqrt{\log H}\,\ee^{tL/4}
 \qquad(0<t\leq\varepsilon).
\end{equation*}

For each zero counted by $M(\varepsilon)$, write $t_\rho=1-\beta$.
Then $0\leq t_\rho\leq\varepsilon$ and
\begin{equation}\label{eq:zero-weight-identity}
 \ee^{-Lt_\rho}=\ee^{-L\varepsilon}
       +L\int_{t_\rho}^{\varepsilon}\ee^{-Lt}\,dt.
\end{equation}
The number of these zeros is finite by \Cref{thm:lau-wu}(c).
Summing \eqref{eq:zero-weight-identity}, with multiplicities, gives
\begin{align}
 &\sum_{\chi\in\mathcal D_y}
  \sum_{\substack{\rho=\beta+i\gamma\in\Zzero_\chi\\
                  \beta\geq1-\varepsilon,\ |\gamma|\leq H}}
 x^\beta \notag\\
 &= x^{1-\varepsilon}M(\varepsilon)
       +xL\int_0^\varepsilon\ee^{-Lt}M(t)\,dt \notag\\
 &\leq Cx\sqrt{\log H}\ee^{-3\varepsilon L/4}
       +CxL\sqrt{\log H}\int_\eta^\varepsilon
                  \ee^{-3Lt/4}\,dt \notag\\
 &\leq Cx\sqrt{\log H}\ee^{-3\varepsilon L/4}
       +\frac{4C}{3}x\sqrt{\log H}\ee^{-3\eta L/4}
       =o(x).
 \label{eq:near-zero}
\end{align}
To justify the last equality, use $\log H\sim J$ and
$\log\log H\sim T$. Since $H\leq b\kappa LT$, we have
\begin{equation*}
 \eta L=\frac{c_1L\sqrt{\log H}}{2H}
 \geq\frac{c_1}{2b\kappa}\frac{\sqrt{\log H}}{T}
 \gg\frac{\sqrt J}{T}.
\end{equation*}
Consequently,
\begin{equation*}
 \log\log H\sim T,\qquad
 \frac{\varepsilon L}{\log\log H}\asymp\frac{L}{\sqrt J\,T}
       \longrightarrow\infty,\qquad
 \frac{\eta L}{\log\log H}
       \gg\frac{\sqrt J}{T^2}\longrightarrow\infty.
\end{equation*}
Thus both exponential factors in \eqref{eq:near-zero} tend to
zero faster than $1/\sqrt{\log H}$.  Since $|W(\rho)|\leq C_h$ in the critical
strip by \eqref{eq:Wdecay}, the same $o(x)$ bound holds with
$x^\beta$ replaced by $|W(\rho)x^\rho|$.

\medskip\noindent
\textbf{Step 3. The remaining zeros and the aggregate estimate.}
For $\beta<1-\varepsilon$ and $|\gamma|\leq H$, we have
$|W(\rho)x^\rho|\leq C_hx^{1-\varepsilon}$.  Each character has
conductor $Q\leq4P_y$, so \Cref{thm:lau-wu}(c), including its assertion
at $0$, gives
\begin{align}
 &\sum_{\chi\in\mathcal D_y}
  \sum_{\substack{\rho=\beta+i\gamma\in\Zzero_\chi\\
                  \beta<1-\varepsilon,\ |\gamma|\leq H}}
       |W(\rho)x^\rho| \notag\\
 &\qquad\ll x^{1-\varepsilon}R_yH\log\bigl(4P_y(H+2)\bigr)
       \ll x^{1-\varepsilon}R_yH^2=o(x).
 \label{eq:low-zeros}
\end{align}
Indeed, $\log(4P_y(H+2))=H+O(\log H)$, and the logarithm of
the last bound divided by $x$ is
\begin{equation*}
 -\varepsilon L+\log R_y+2\log H
       =-\varepsilon L+o(\varepsilon L)\longrightarrow-\infty.
\end{equation*}
Here \eqref{eq:family-log-size} applies and
$\log H=O(J)=o(L/\sqrt J)$.

For the zeros with $|\gamma|>H$, let $m_0=\lfloor H\rfloor$ and
partition the absolute ordinates into intervals $[m,m+1)$,
$m\geq m_0$.  The number of zeros of any one character in such
an interval is at most its total number with $|\gamma|\leq m+1$,
which is $O((m+1)\log(Q(m+3)))$ by \Cref{thm:lau-wu}(c).
Also $x^\beta\leq x$, $|W(\rho)|\leq C_h\ee^{-m}$, and
$\log Q\leq H+\log4$.  Hence
\begin{align}
 &\sum_{\chi\in\mathcal D_y}
  \sum_{\substack{\rho\in\Zzero_\chi\\|\Im\rho|>H}}
       |W(\rho)x^\rho| \notag\\
 &\qquad\ll xR_y\sum_{m=m_0}^{\infty}
       \ee^{-m}(m+1)\bigl(H+\log(m+3)+1\bigr) \notag\\
 &\qquad\ll xR_y\ee^{-H}(H+1)^2=o(x).
 \label{eq:high-zeros}
\end{align}
For the second inequality, set $m=m_0+j$ and bound the summand by
$O(\ee^{-H-j}(H+1)^2(j+1)^2)$; the sum over $j\geq0$ converges.
For the last equality, \eqref{eq:scale-bounds} and
\eqref{eq:family-log-size} give
$$\log R_y+2\log(H+1)=o(H).$$

Apply \Cref{lem:explicit} to every $\chi\in\mathcal D_y$.
Its error terms sum to $o(x)$ by \eqref{eq:family-small}, since
$|\mathcal D_y|\leq R_y$ and $\log(Q+2)=O(H)$.
The three ranges in \eqref{eq:near-zero}, \eqref{eq:low-zeros}, and
\eqref{eq:high-zeros} exhaust $\Zzero_\chi$.  The triangle inequality
therefore yields
\begin{equation}\label{eq:aggregate}
 \sum_{\chi\in\mathcal D_y}
 \left|T_x(\chi)-\1_{\chi=\chi_1}xW(1)\right|=o(x).
\end{equation}
The bounds above use $|\mathcal D_y|\leq R_y$ and
$M(t)\leq N_1(1-t;y)+N_4(1-t;y)$. Their right-hand sides do not
depend on which prime is omitted. Hence the $o(x)$ error in
\eqref{eq:aggregate} is uniform over the choices of $E_y$ in
\Cref{lem:deletion}.

\medskip\noindent
\textbf{Step 4. A prime with the required residue pattern.}
For every integer $n$, define
\begin{equation}\label{eq:F-definition}
 F_y(n)=(1+\chi_{-4}(n))(1-\chi_8(n))
       \prod_{\substack{r\in\mathcal P,\ r\leq y\\r>2,\ r\notin E_y}}
             (1+\chi_{r^*}(n)).
\end{equation}
Each character value belongs to $\{-1,0,1\}$, so $F_y(n)\geq0$.
The integer $P_y^*/2$ is the product of the odd primes
$r\leq y$ with $r\notin E_y$, each taken once. Thus the products
formed by choosing any of these primes are precisely the divisors
$d\mid P_y^*/2$; the empty choice gives $d=1$.
Expanding \eqref{eq:F-definition} and using
\eqref{eq:quadratic-products} gives
\begin{equation*}
 F_y(n)=\sum_{\substack{d\in\N\\d\mid P_y^*/2}}
 \bigl(\chi_{d^*}(n)+\chi_{-4d^*}(n)
       -\chi_{8d^*}(n)-\chi_{-8d^*}(n)\bigr).
\end{equation*}
The discriminant classification in \Cref{sec:quadratic-characters} shows that these are
exactly the characters of $\mathcal D_y$, each occurring once.
Thus $$F_y=\sum_{\chi\in\mathcal D_y}c_\chi\chi,$$ where
$c_\chi\in\{-1,1\}$ and $c_{\chi_1}=1$.
For fixed $x$, the formula for $h$ gives
\begin{equation*}
 \sum_{p\in\mathcal P}(\log p)|h(p/x)|
 \ll_x\sum_{n=2}^{\infty}n^{37}(\log n)\ee^{-24n/x}<\infty.
\end{equation*}
Since there are finitely many characters and $|\chi(p)|\leq1$,
we may sum the character expansion over primes. By \eqref{eq:aggregate},
\begin{align}
 \sum_{p\in\mathcal P}(\log p)h(p/x)F_y(p)
 =\sum_{\chi\in\mathcal D_y}c_\chi T_x(\chi)
 =xW(1)+o(x)>0,
 \label{eq:positive}
\end{align}
since $W(1)>0$ by \eqref{eq:Wdecay}.

For $p\notin(x,2x)$, the corresponding summand in
\eqref{eq:positive} is nonpositive, because $h(p/x)\leq0$ and
$F_y(p)\geq0$.  There must therefore be a prime $q\in(x,2x)$ with
$F_y(q)>0$.  Since $y=\kappa\log x\log_3x=o(x)$, we have $q>y$
for large $x$.  The prime $q$ consequently divides none of the
conductors in \eqref{eq:F-definition}; all character values there
are $\pm1$.  Positivity of the product forces
\begin{equation*}
 \chi_{-4}(q)=1,\qquad\chi_8(q)=-1,\qquad
 \chi_{r^*}(q)=1\quad(r\leq y,\ r>2,\ r\notin E_y).
\end{equation*}
The first identity gives $q\equiv1\pmod4$.  The supplementary law
at $2$ gives $$\chi_8(q)=\leg{2}{q}=-1$$ only for
$q\equiv3,5\pmod8$; hence $q\equiv5\pmod8$.
Also $r^*=\pm r$, and $\displaystyle \leg{-1}{q}=1$ because $q\equiv1\pmod4$.
We conclude that
\begin{equation}\label{eq:qpattern}
 \begin{gathered}
 x<q<2x,\qquad q\equiv5\pmod8,\\
 \leg{r}{q}=1\quad(r\leq y,\ r\text{ an odd prime},\ r\notin E_y).
 \end{gathered}
\end{equation}

\medskip\noindent
\textbf{Step 5. An admissible subset of the prime-power box.}
Now let $N$ be sufficiently large, take $x=p_N$, and set
$K=\lfloor y\rfloor$.  Then $1\leq K<q$.  Every prime in $E_y$
is at most $y$ and is an integer, so it is at most $K$.
Thus \eqref{eq:qpattern} verifies every hypothesis of
\Cref{lem:residue}, with $E=E_y$.  It supplies $S\subset\Fq^\times$
with $|S|\geq K/6-1$ and \eqref{eq:Sproperty}.
Every $n\in\Phi_N$ is nonzero modulo $q$, since its prime factors
are at most $p_N<q$.  For each $c\in\Fq^\times$, put
\begin{equation*}
 B_c=\{n\in\Phi_N:n\bmod q\in cS\}.
\end{equation*}
If $u,v\in B_c$ and $w\in\N$ satisfied $u^2+v^2=w^2$, choose
$s,t\in S$ with $u\equiv cs$ and $v\equiv ct\pmod q$.
Then
\begin{equation*}
 w^2\equiv c^2(s^2+t^2)\pmod q,\qquad
 \leg{c^2(s^2+t^2)}{q}=-1.
\end{equation*}
The second equality follows from \eqref{eq:S-translate}.
It contradicts the first, since a square has Legendre symbol $0$ or
$1$.  Thus every $B_c$ is admissible.

For any fixed $n\in\Phi_N$ and $s\in S$, there is a unique
$c\in\Fq^\times$ with $n\equiv cs\pmod q$, namely
$c=(n\bmod q)s^{-1}$.  Distinct $s$ give distinct $c$, since
$n\bmod q\ne0$.  Therefore exactly $|S|$ translates contain
$n\bmod q$, and double counting yields
\begin{equation}\label{eq:translate-average}
 \sum_{c\in\Fq^\times}|B_c|=|\Phi_N||S|=N^N|S|,
 \qquad f(N)\geq\max_c|B_c|\geq\frac{N^N|S|}{q-1}.
\end{equation}

\medskip\noindent
\textbf{Step 6. The quantitative lower bound.}
Increase $N$ so that $K\geq6$.  Since $q<2p_N$,
$K\geq y-1$, and $N=\pi(p_N)\geq ap_N/\log p_N$ by
\Cref{lem:cheb}, \eqref{eq:translate-average} gives
\begin{align}
 \frac{f(N)}{N^{N-1}}
 &\geq\frac{N}{2p_N}\left(\frac K6-1\right)
  \geq\frac{a}{2\log p_N}\left(\frac K6-1\right) \notag\\
 &\geq\frac{a}{2\log p_N}
       \left(\frac{\kappa\log p_N\log_3p_N}{6}-\frac76\right)
  =\frac{a\kappa}{12}\log_3p_N-\frac{7a}{12\log p_N}.
 \label{eq:final-intermediate}
\end{align}
The use of the lower bound for $N$ is valid because $K/6-1\geq0$.
Since $p_N\geq N$, monotonicity of the iterated logarithm gives
$\log_3p_N\geq\log_3N$ for sufficiently large $N$.
The error term in \eqref{eq:final-intermediate} tends to zero,
whereas $\log_3N\to\infty$.  We can therefore choose an absolute
$N_0$ satisfying all earlier thresholds and
\begin{equation*}
 \frac{7a}{12\log p_N}\leq\frac{a\kappa}{24}\log_3N
 \qquad(N\geq N_0).
\end{equation*}
Substitution into \eqref{eq:final-intermediate} proves
\eqref{eq:main} for every $N\geq N_0$.
\end{proof}

\begin{remark}[On replacing $\log_3N$ by $\log_2N$]
\label{rem:log-two-obstruction}
The factor $\log_3N$ comes from the choice
$y=\kappa\log x\log_3x$. Indeed, with $x=p_N$, the averaging
argument and the bound $|S|\gg y$ give
\begin{equation*}
 f(N)\gg N^{N-1}\frac{y}{\log x},
\end{equation*}
provided that a prime $q\in(x,2x)$ with the required residue
pattern is available. Chebyshev's estimates give
$\log_2p_N\sim\log_2N$. To obtain a factor of order $\log_2N$
by this construction, one would therefore seek that pattern at
the larger scale
\begin{equation*}
 y=\delta LJ,\qquad
 L=\log x,\quad J=\log L,\quad \delta>0\text{ fixed}.
\end{equation*}
The estimates used above do not establish it, for the following reasons.

At this scale, the bounds for $\vartheta(y)$ give
$H\asymp\delta LJ$ and $\log H\sim J$. Consequently,
\begin{equation*}
 \eta L=\frac{c_1L\sqrt{\log H}}{2H}
       \asymp\frac{1}{\delta\sqrt J}\longrightarrow0,
 \qquad
 \frac{c_2H}{L\sqrt{\log H}}
       \asymp\delta\sqrt J\longrightarrow\infty.
\end{equation*}
The first relation shows that the zero-free region no longer
supplies the decay used after
\eqref{eq:near-zero}: the factor
$\sqrt{\log H}\exp(-3\eta L/4)$ is now asymptotic to $\sqrt J$.
The second relation prevents the first inequality in
\eqref{eq:density-coeff}.
Thus the argument for making the contribution of zeros near $1$
equal to $o(x)$ fails at this scale.

Changing only the cutoff $\varepsilon\in(0,1/2)$ does not repair
the estimates in Steps~2 and~3. By \eqref{eq:character-family-size}
and the bounds for $\pi(y)$, we have $\log R_y\asymp\delta L$.
To obtain an $o(x)$ estimate from the final bound in
\eqref{eq:low-zeros}, one needs
\begin{equation*}
 \log\bigl(x^{-\varepsilon}R_yH^2\bigr)
 =-\varepsilon L+\log R_y+2\log H
 \longrightarrow-\infty.
\end{equation*}
Since $\log R_y\geq c\delta L$ for some absolute $c>0$,
this expression can tend to $-\infty$ only if
$\varepsilon\geq c\delta/2$ for all sufficiently large $x$.
Such a cutoff exceeds $\eta_1(y)$, which tends to zero.
We then have $\log(1/\varepsilon)=O_\delta(1)$, whereas
\begin{equation*}
 \frac{c_2H}{L\log(1/\varepsilon)}
       \gg_\delta J\longrightarrow\infty.
\end{equation*}
Hence the second inequality in \eqref{eq:density-coeff} cannot
hold either.

A direct enlargement of $y$ therefore does not yield the desired
improvement. To make the argument work at this larger scale, one would
need stronger control of zeros near $\Re s=1$, or an argument that avoids
the present absolute estimates
over the character family. These observations concern the estimates
used here; they do not assert that the factor $\log_3N$ is optimal.
\end{remark}

\FinalContact


\begin{thebibliography}{99}
\small
\bibitem{Apostol}
\textsc{T.~M.~Apostol}, \emph{Introduction to Analytic Number Theory},
Undergraduate Texts in Mathematics, Springer-Verlag, New York, 1976.
Available at \url{https://doi.org/10.1007/978-1-4757-5579-4}.

\bibitem{AzevedoMoreira}
\textsc{G.~Azevedo and J.~Moreira}, Pythagorean triples in level sets of
completely multiplicative functions, preprint, arXiv:2607.04903
[math.NT], 2026.
Available at \url{https://arxiv.org/abs/2607.04903}.

\bibitem{CooperOverstreet}
\textsc{J.~Cooper and R.~Overstreet}, Coloring so that no Pythagorean
triple is monochromatic, preprint, arXiv:1505.02222 [math.CO], 2015.
Available at \url{https://arxiv.org/abs/1505.02222}.

\bibitem{FrantzikinakisHost}
\textsc{N.~Frantzikinakis and B.~Host}, Higher order Fourier analysis of
multiplicative functions and applications, \emph{J. Amer. Math. Soc.}
\textbf{30} (2017), no.~1, 67--157.
Available at \url{https://doi.org/10.1090/jams/857}.

\bibitem{FKMgeneralized}
\textsc{N.~Frantzikinakis, O.~Klurman, and J.~Moreira}, Partition regularity
of generalized Pythagorean pairs, preprint, arXiv:2407.08360
[math.CO, math.NT], 2024, revised 2026.
Available at \url{https://arxiv.org/abs/2407.08360}.

\bibitem{FKM}
\textsc{N.~Frantzikinakis, O.~Klurman, and J.~Moreira}, Partition regularity
of Pythagorean pairs, \emph{Forum Math. Pi} \textbf{13} (2025), e5.
Available at \url{https://doi.org/10.1017/fmp.2024.27}.

\bibitem{FrantzikinakisMountakis}
\textsc{N.~Frantzikinakis and A.~Mountakis}, Recurrence for pretentious
systems along generalized Pythagorean triples, preprint,
arXiv:2508.09778 [math.DS, math.CO, math.NT], 2025, revised 2026.
Available at \url{https://arxiv.org/abs/2508.09778}.

\bibitem{GrahamFavorite}
\textsc{R.~Graham}, Some of my favorite problems in Ramsey theory,
in \emph{Combinatorial Number Theory}, de Gruyter, 2007, pp.~229--236.
Available at
\url{https://mathweb.ucsd.edu/~ronspubs/07_04_favorite_problems.pdf}.

\bibitem{HeuleKullmannMarek}
\textsc{M.~J.~H.~Heule, O.~Kullmann, and V.~W.~Marek}, Solving and
verifying the Boolean Pythagorean triples problem via cube-and-conquer,
in \emph{Theory and Applications of Satisfiability Testing---SAT 2016},
Lecture Notes in Computer Science, vol.~9710, Springer, 2016,
pp.~228--245.
Available at \url{https://doi.org/10.1007/978-3-319-40970-2_15}.

\bibitem{IwaniecKowalski}
\textsc{H.~Iwaniec and E.~Kowalski}, \emph{Analytic Number Theory},
American Mathematical Society Colloquium Publications, vol.~53,
American Mathematical Society, Providence, RI, 2004.
Available at \url{https://doi.org/10.1090/coll/053}.

\bibitem{LauWu}
\textsc{Y.-K.~Lau and J.~Wu}, On the least quadratic non-residue,
\emph{Int. J. Number Theory} \textbf{4} (2008), no.~3, 423--435.
Available at \url{https://doi.org/10.1142/S1793042108001432}.

\bibitem{Leng2026}
\textsc{J.~Leng}, A new proof of two-color partition regularity of
Pythagorean triples, preprint, arXiv:2609.34676
[math.NT, math.CO, math.DS], 2026.
Available at \url{https://arxiv.org/abs/2609.34676}.

\bibitem{LengMcNamaraMountakis}
\textsc{J.~Leng, R.~McNamara, and A.~Mountakis}, Monochromatic
almost-Pythagorean quadruples, preprint, arXiv:2610.04078
[math.DS, math.CO], 2026.
Available at \url{https://arxiv.org/abs/2610.04078}.

\bibitem{Olver}
\textsc{F.~W.~J.~Olver}, \emph{Asymptotics and Special Functions},
A.~K.~Peters, Wellesley, MA, 1997; corrected reprint of the 1974
edition published by Academic Press.
Available at \url{https://doi.org/10.1201/9781439864548}.

\bibitem{Titchmarsh}
\textsc{E.~C.~Titchmarsh}, \emph{The Theory of the Riemann Zeta-function},
2nd ed., revised by D.~R.~Heath-Brown, Oxford University Press,
Oxford, 1986.
Available at
\url{https://sites.math.rutgers.edu/~zeilberg/EM18/TitchmarshZeta.pdf}.

\bibitem{Wesley2026}
\textsc{W.~J.~Wesley}, An elementary proof of the 2-regularity of
Pythagorean triples, preprint, arXiv:2609.39063 [math.CO], 2026.
Available at \url{https://arxiv.org/abs/2609.39063}.

\bibitem{XiaoPrimeBoxes}
\textsc{R.~Xiao}, Pythagorean pairs in boxes of prime powers, preprint,
arXiv:2610.11965v1 [math.NT, math.CO], 2026.
Available at \url{https://arxiv.org/abs/2610.11965v1}.
\end{thebibliography}
\end{document}